\documentclass{amsart}

\usepackage[T1]{fontenc}

\usepackage[margin=2cm]{geometry}

\usepackage{mathtools}

\usepackage{amssymb}

\usepackage{bm}

\usepackage{bbm}

\usepackage{esint}

\usepackage[thinc]{esdiff}

\usepackage[overload]{empheq}

\newcommand{\dd}{\mathop{}\!\mathrm{d}}

\usepackage{xcolor}

\usepackage{graphicx}

\graphicspath{{./Images/}}

\usepackage{subfig}

\usepackage{transparent}

\usepackage{eso-pic}

\usepackage{tikz}

\usepackage{float}

\usepackage{tabularx}

\usepackage{algorithm}

\usepackage{algorithmic}

\theoremstyle{plain}

\newtheorem{theorem}{Theorem}[section]
\newtheorem{lemma}[theorem]{Lemma}
\newtheorem{proposition}[theorem]{Proposition}
\newtheorem{corollary}[theorem]{Corollary}

\theoremstyle{definition}

\newtheorem{definition}[theorem]{Definition}
\newtheorem{example}[theorem]{Example}

\theoremstyle{remark}

\newtheorem{remark}[theorem]{Remark}

\usepackage[
    square,
    numbers,
    sort&compress
]{natbib}

\usepackage[
    colorlinks=true,
    linkcolor=blue,
    citecolor=blue,
    urlcolor=blue,
    filecolor=blue,
    linktoc=all
]{hyperref}

\usepackage[nameinlink]{cleveref}

\crefname{section}{Section}{Sections}
\crefname{theorem}{Theorem}{Theorems}
\crefname{lemma}{Lemma}{Lemmas}
\crefname{proposition}{Proposition}{Propositions}
\crefname{corollary}{Corollary}{Corollaries}
\crefname{definition}{Definition}{Definitions}
\crefname{example}{Example}{Examples}
\crefname{problem}{Problem}{Problems}
\crefname{remark}{Remark}{Remarks}

\newcommand{\R}{\mathbb{R}}
\newcommand{\N}{\mathbb{N}}
\newcommand{\Z}{\mathbb{Z}}
\newcommand{\T}{\mathbb{T}}
\newcommand{\Sp}{\mathbb{S}}
\newcommand{\eps}{\varepsilon}

\title[Mixing and enhanced dissipation in any spatial dimension]{Optimal mixing and enhanced dissipation for a smooth velocity field in any spatial dimension}
\author{Luca Melzi}
\address{Department of Mathematics, Imperial College London}
\email{l.melzi24@imperial.ac.uk}
\date{\today}

\subjclass[2020]{37A25, 60J05, 35Q49, 76F25}
\keywords{Advection-diffusion equation, mixing, enhanced dissipation, random shear flows, ergodicity}

\begin{document}

\begin{abstract}
    In this note, we consider the advection-diffusion equation on the $d$-dimensional torus, where $d$ is arbitrary.
    First, for the associated transport equation, we provide an explicit example of a space-time smooth, divergence-free, random velocity field that is a universal exponential mixer.
    Second, we set the molecular diffusivity to a (small) positive value, and we show that the same velocity field is dissipation enhancing with optimal rate.
    Our construction is based on the Random Dynamical Systems viewpoint, and relies on the uniform ergodicity of the two-point Markov process associated with the two-dimensional Pierrehumbert model.
\end{abstract}

\maketitle
%\tableofcontents

\section{Introduction}
Let us consider the advection-diffusion equation
\begin{align}
    \partial_t\rho+u\cdot\nabla\rho=\kappa\Delta\rho\quad&\text{on }[0,\infty)\times\T^d, \label{eq:passive_scalar} \\
    \rho|_{t=0}=\rho_0\quad&\text{on }\T^d. \label{eq:passive_scalar_ic}
\end{align}
Here, $d\in\N,d\ge2$ is any spatial dimension, $\kappa\ge0$ is the molecular diffusivity, and $u:[0,\infty)\times\T^d\to\R^d$ is a given time-dependent divergence-free velocity field.
% , i.e.,
% \begin{equation*}
%     \nabla\cdot u=0\quad\text{on }[0,\infty)\times\T^d.
% \end{equation*}
We choose $\rho_0$ to be mean-free, which implies that the corresponding solution $\rho(t)$ is mean-free for any $t\ge0$.
Observe that testing \eqref{eq:passive_scalar} by $\rho$ itself yields the energy balance
\begin{equation}
    \|\rho(t)\|^2_{L^2}+2\kappa\int_0^t\|\nabla\rho(s)\|_{L^2}^2\;\mathrm{d}s=\|\rho_0\|^2_{L^2}\quad\forall t\ge0.
    \label{eq:energy_balance}
\end{equation}

\subsection{Mixing}

To begin with, we set $\kappa=0$ and study the phenomenon of \textit{mixing}, that is related to the formation of small scales induced by the dynamics \eqref{eq:passive_scalar}.
This is observed both numerically and experimentally, and in mathematical terms it can be described through the decay of some suitable norm of the solution $\rho$.
Setting $\kappa=0$ in \eqref{eq:energy_balance}, we see that the $L^2$-norm of $\rho$ is conserved.
In contrast, negative Sobolev norms may exhibit some decay.
\begin{definition} \label{def:univ_exp_mix}
    Let $s>0$.
    We say that a velocity field $u:[0,\infty)\times\T^d\to\R^d$ is a universal mixer if there exists $r:[0,\infty)\to[0,\infty)$ monotonically decreasing such that $\lim_{t\to\infty}r(t)=0$ and for any initial datum $\rho_0\in H^s$ it holds
    \begin{equation*}
        \|\rho(t)\|_{H^{-s}}\le r(t)\|\rho_0\|_{H^s}\quad\forall t\ge0.
    \end{equation*}
\end{definition}
If $r(t)=C\mathrm{e}^{-\gamma t}$ for some constants $C,\gamma>0$, we say that $u$ is \textit{exponentially} mixing.
For a Lipschitz continuous velocity field $u$, this is the fastest mixing rate achievable \cite{CrippaDeLellis2008,IyerKiselevXu2014,Seis2013}.
One of the aims of this work is to exhibit a universal exponential mixer on $\T^d$, and to do so we rely on randomness.
For $d=2$, one relevant example of a random universal exponential mixer is provided by the Pierrehumbert model, that was introduced in \cite{Pie1994} and then proved to be exponentially mixing with probability 1 in \cite{BluCotGva2023}.
The Pierrehumbert velocity field $U_{\underline{\omega}}$ is defined as follows.
For any $n\in\N$, let us sample the vector $\omega_n=(\omega_n^1,\omega_n^2)$ from a uniform distribution over $[0,2\pi)^2$, and denote by $\underline{\omega}=\{\omega_n\}_{n\in\N}$ the sequence of independent identically distributed phase shifts.
On the time interval $[n,n+1)$ we have
\begin{equation}
    U_{\underline{\omega}}(t,x_1,x_2):=\begin{cases}
        \begin{bmatrix}
            \sin(x_2-\omega_n^1)\\
            0
        \end{bmatrix}\quad&\text{if }t\in[n,n+\frac{1}{2}),\\
        \\
        \begin{bmatrix}
            0\\
            \sin(x_1-\omega_n^2)
        \end{bmatrix}\quad&\text{if }t\in[n+\frac{1}{2},n+1).
    \end{cases}
    \label{eq:Pierrehumbert}
\end{equation}
Note that if one randomises the amplitude of each shear instead of the phase, the resulting velocity field is still exponentially mixing on $\T^2$ \cite{coooperman2023}.
Other examples of universal exponential mixers for $d=2$ have been obtained as solutions to suitably forced stochastic Navier--Stokes equations \cite{coopermanRowan2026,BedBluPun2022}, while for the deterministic setting we mention the work \cite{ElgLisMat2025}.

Moving up to dimension $d=3$, the problem of providing an explicit example of a universal exponential mixer has been already addressed in the literature.
For instance, the work \cite{CotNav2026} proves that suitably randomised ABC flows are exponentially mixing.
Here, we further extend the analysis to any $d$, $d\ge3$.
To the best of our knowledge, this aim has only been addressed in \cite{ELGINDI2019106807}, where the authors provide an example of a deterministic, time-periodic exponential universal mixer with Sobolev space regularity $W^{s,p}$ for $s<\frac{1+\sqrt{5}}{2}$.
In contrast, in the present work we exhibit a random, space-time smooth exponential universal mixer.
More precisely, for some smooth function $g:\T^2\to\R^{d-2}$ and a sufficiently large fixed $N\in\N$, we define our velocity field on the time interval $[n(N+1),n(N+1)+N+1)$, $n\in\N$ as follows:
\begin{equation}
    u_{\underline{\omega}}(t,x_1,\dots,x_d):=\begin{cases}
        \begin{bmatrix}
            \boldsymbol{0}_2\\
            g(x_1,x_2)
        \end{bmatrix}\quad&\text{if }t\in[n(N+1),n(N+1)+1),\\
        \\
        \begin{bmatrix}
            U_{\underline{\omega}}(t,x_1,x_2)\\
            \boldsymbol{0}_{d-2}
        \end{bmatrix}\quad&\text{if }t\in[n(N+1)+1,n(N+1)+N+1),
    \end{cases}
    \label{eq:def_of_u}
\end{equation}
where $\boldsymbol{0}_m$ denotes the $m$-dimensional zero-vector.
\begin{remark}
    The velocity field defined in \eqref{eq:def_of_u} is smooth in space and piecewise constant in time.
    To obtain a space-time smooth velocity field, it is enough to multiply by a compactly supported smooth function on each time interval where $u_{\underline{\omega}}$ is autonomous.
    Namely, for any $m\in\N$, since the velocity field $u_{\underline{\omega}}$ is autonomous on the time interval $\left[\frac m2,\frac{m+1}2\right]$, define the smoothened version $\tilde u_{\underline{\omega}}\in\mathcal{C}^\infty([0,\infty)\times\T^d)$ by $\tilde u_{\underline{\omega}}(t,\cdot):=\beta(t)u_{\underline{\omega}}(\cdot)$, where
    \begin{equation*}
        \beta\in\mathcal C^\infty_c(\R),\quad\operatorname{supp}\beta\subset\left[\frac m2,\frac{m+1}2\right],\quad\beta\ge0,\quad\int_{\frac m2}^{\frac{m+1}2}\beta(s)\dd s=\frac12.
    \end{equation*}
    For the ease of presentation, we will outline our argument considering the piecewise constant in time velocity field $u_{\underline{\omega}}$, but the properties of $\beta$ ensure that the same argument applies to the smoothened version $\tilde u_{\underline{\omega}}$.
    
    Note that, for $\kappa=0$, smoothing in time yields the exact same dynamics \eqref{eq:passive_scalar} for the same initial condition \eqref{eq:passive_scalar_ic}, up to a suitable rescaling in time, see the Remark on page 1914 of \cite{YaoZla2017}.
\end{remark}
The non-degeneracy condition that we impose on $g$ is the following:
\begin{equation}
    \exists r\in\N\,:\,\operatorname{Span}\biggl\{\partial^{\alpha}g(x)\,\bigg|\,1\le|\alpha|\le r,\,\alpha\in\N^2\biggr\}=\R^{d-2}\quad\forall x\in\T^2,
    \label{eq:assumption_on_g}
\end{equation}
employing the notation $|\alpha|:=\alpha_1+\alpha_2$.
In Section \ref{subsec:choice_of_g} we provide an example of a smooth function $g$ fulfilling \eqref{eq:assumption_on_g}, see Example \ref{example:g}.
Let us recall that a solution $\rho(t)$ to \eqref{eq:passive_scalar}-\eqref{eq:passive_scalar_ic} with $u=u_{\underline{\omega}}$ given in \eqref{eq:def_of_u} fulfils
% \begin{equation}
%     \rho(n(N+1)+N+1)=\mathbb{E}_W[\rho_0\circ (T_\kappa^{n+1})^{-1}]
%     \label{eq:random_on_0}
% \end{equation}
\begin{equation}
    \rho(n(N+1))=\mathbb{E}_W[\rho_0\circ (T_\kappa^{n})^{-1}]
    \label{eq:random_on_0}
\end{equation}
for some random map $T_\kappa^{n}:\T^d\to\T^d$, with $\mathbb{E}_W$ denoting the expected value with respect to the Brownian motion (see Section \ref{sec:preliminaries} for the precise definitions).
Our main result is the following.
\begin{theorem} \label{thm:main}
    Fix any $q,s>0$.
    For every sufficiently small $\kappa\ge0$, there exist a positive random variable $D_\kappa$ with $\mathbb{E}|D_\kappa|^q<\infty$ uniformly in $\kappa$, and a constant $\gamma>0$ independent of $\kappa$ with the following property.
    For any mean-free $\varphi,\psi\in H^s(\T^d)$, the bound
    \begin{equation}
        \left|\int_{\T^d}\varphi(x)\psi(T_\kappa^n(x))\;\mathrm{d}x\right|\le D_\kappa\mathrm{e}^{-\gamma n}\|\varphi\|_{H^s}\|\psi\|_{H^s}
        \label{eq:decay_of_correlations_main_thm}
    \end{equation}
    holds almost surely for any $n\in\N$.
\end{theorem}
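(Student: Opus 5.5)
The plan is to follow the random dynamical systems route of Blumenthal--Coti Zelati--Gvalani: reduce the correlation bound \eqref{eq:decay_of_correlations_main_thm} to a uniform geometric ergodicity statement for the \emph{two-point} Markov chain $(x,y)\mapsto (T_\kappa(x),T_\kappa(y))$ on $\T^d\times\T^d\setminus\Delta$. Here $T_\kappa$ is the one-period ($N+1$ time units) stochastic flow map. By the Fourier/Borel--Cantelli argument of that framework, it suffices to show the following. There exist a Lyapunov function $V:\T^d\times\T^d\setminus\Delta\to[1,\infty)$ with $V(x,y)\gtrsim |x-y|^{-p}$ for some small $p>0$, and constants $\gamma_0>0$, $C>0$ independent of $\kappa$, such that
\begin{equation*}
    \left|P^{(2),n}_\kappa h(x,y)-\int h\,\dd(\mathrm{Leb}\otimes\mathrm{Leb})\right|\le C\mathrm{e}^{-\gamma_0 n}V(x,y)\|h\|_{V}
\end{equation*}
for all observables $h$ with $|h|\le V$. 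Indeed, writing $\mathbb{E}|\int\varphi\,\psi\circ T^n_\kappa|^2=\iint\varphi(x)\varphi(y)\,\mathbb{E}[\psi(T^n_\kappa x)\psi(T^n_\kappa y)]\dd x\dd y$, we can apply this bound with $h=\psi\otimes\psi$ on Fourier modes. The product measure term vanishes by mean-freeness, and $\iint |x-y|^{-p}\dd x\dd y<\infty$ in any dimension $d$. This gives $\mathbb{E}|\cdot|^2\lesssim \mathrm{e}^{-\gamma_0 n}\|\varphi\|_{H^s}^2\|\psi\|_{H^s}^2$ on modes. Chebyshev plus Borel--Cantelli over $n$ and over frequencies, with polynomial loss absorbed by $H^s$, then produce the random constant $D_\kappa$ with finite $q$-th moments uniformly in $\kappa$. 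This step is standard once the two-point estimate is known, so I would quote it from the framework recalled in Section~\ref{sec:preliminaries}.

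To obtain the two-point geometric ergodicity, I would use Harris' theorem: a drift condition $P^{(2)}_\kappa V\le \alpha V+b$ with $\alpha<1$ uniform in $\kappa$, and a minorization on sublevel sets of $V$. The structure of $u_{\underline\omega}$ is exploited as follows. During the first unit of time, the motion is $x'\mapsto x'$, $x''\mapsto x''+g(x')$ for $x=(x',x'')\in\T^2\times\T^{d-2}$. During the remaining $N$ units, it is $N$ steps of the Pierrehumbert chain in $x'$ with $x''$ frozen, plus noise when $\kappa>0$.

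For the drift condition, I would take $V(x,y)=|x'-y'|^{-p}\,$-type contributions from the two-dimensional Pierrehumbert Lyapunov function $V_2(x',y')$, which by \cite{BluCotGva2023} satisfies $P^{(2),N}_{2}V_2\le \alpha^N V_2+b$. I would then add a term handling the case $x'= y'$ but $x''\neq y''$, for instance $V=V_2(x',y')^{\theta}\,|x''-y''|^{-p}$ combined with $V_2$. The key point is that after the $N$ Pierrehumbert steps the pair $(x',y')$ is typically separated. The subsequent $g$-shear then changes $x''-y''$ by $g(x')-g(y')$, and by the nondegeneracy \eqref{eq:assumption_on_g} this difference is nonzero in a quantitative (Łojasiewicz/polynomial in $|x'-y'|$) way for most configurations. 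Taking $N$ large makes $\alpha^N$ beat the bounded expansion constants of the $g$-step, which is an autonomous smooth diffeomorphism. I expect this step to be the main obstacle: the $d-2$ transverse directions are only sheared, not stretched. Controlling the expectation of $|x''-y''+g(x')-g(y')|^{-p}$ requires an anti-concentration estimate. Since $x'$ after the Pierrehumbert steps has a smooth law, using \eqref{eq:assumption_on_g} with the order $r$, one should get $\mathbb{E}|v+g(X')-g(Y')|^{-p}\lesssim$ const for $p<1/r$ uniformly in $v$, via a sublevel-set (van der Corput type) estimate.

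For the minorization, I would show that on compact subsets of $\T^d\times\T^d\setminus\Delta$ the two-point transition kernel over two periods has a density bounded below. In $x'$ this follows from the Pierrehumbert two-point chain, which is known to be uniformly minorized there. In $x''$, the shift $g(X')-g(Y')$, evaluated at the random and smoothly distributed $x'$-pair, spreads over an open set of $\R^{d-2}\times\R^{d-2}$ by \eqref{eq:assumption_on_g}. Composing two periods then gives full support by a Hörmander-type submersion argument. Uniformity in small $\kappa$ follows since all estimates are perturbative in $\kappa$: the noise only adds a Gaussian convolution, which preserves drift up to $O(\kappa)$ and preserves lower bounds.
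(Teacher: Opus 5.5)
Your outer layer (reduce \eqref{eq:decay_of_correlations_main_thm} to a second-moment bound on Fourier modes, then Chebyshev/Borel--Cantelli over $n$ and frequencies, and interpolation to reach any $s>0$) matches the paper. The gap is in how you obtain the two-point decay, and it sits in your minorization step.

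First, the set $\{x'=y'\}$ is invariant for the two-point process for every $\kappa\ge0$. Both points see the same phases and Brownian path, the $x'$-equation does not involve $x''$, and the $x''$-equation is a pure shear driven by $x'$. On this set $x''-y''$ is conserved exactly, so from a point with $x'=y'$, $x''\neq y''$ the kernel is carried by the $d$-dimensional set $\{X'=Y',\ X''-Y''=x''-y''\}$. Hence there is no density bounded below on compact subsets of $\T^d\times\T^d\setminus\Delta$. Your ``key point'' that the Pierrehumbert steps separate $(x',y')$ fails exactly in the case you wanted to treat. One must work on $\{x'\neq y'\}$ with $V=\infty$ on all of $\{x'=y'\}$. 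Then $V=V_2(x',y')$, which lies in $L^1(\T^d\times\T^d)$, is the natural choice and inherits the drift from the two-dimensional chain. Your extra factor $|x''-y''|^{-p}$ is superfluous, and its drift bound, uniform as $x'\to y'$, is not shown.

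More seriously, even on $\{|x'-y'|\ge c\}$ your spreading claim is dimensionally wrong. The pair of shifts $(g(X'),g(Y'))$ is the image of the $4$-dimensional set $\T^2\times\T^2$, so it cannot fill an open subset of $\R^{d-2}\times\R^{d-2}$ once $d\ge5$. For Example~\ref{example:g}, where $g$ depends on $x_1$ only, this already fails once $d\ge4$. Likewise, the support of the two-point law after two periods has dimension at most $8$ (resp.\ $6$), which is below $2d$ in those ranges. A genuine small-set property needs of order $d$ periods and a full-rank check, in dimension $2d$, of the derivative with respect to the phases. That is exactly the verification the paper's introductory Remark flags as costly and deliberately avoids, and your proposal does not supply it.

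The paper never proves ergodicity of the $d$-dimensional two-point chain. Write $k'=(k'_1,k'_2,k'')$. Because of the shear structure, $\mathbb{E}[e_{k'}(T^j_\kappa x)\overline{e_{k'}(T^j_\kappa y)}]$ depends on $(x'',y'')$ only through the factor $\mathrm{e}^{\mathrm{i}k''\cdot(x''-y'')}$. The other factor is, up to normalisation, $(\mathcal{T}^\kappa_{k''})^j[e^{(2)}_{(k'_1,k'_2)}](x',y')$, where the twisted two-dimensional operator multiplies by $\mathrm{e}^{\mathrm{i}k''\cdot(G^n_\kappa(x')-G^n_\kappa(y'))}$ and then applies $N$ Pierrehumbert steps.

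Lemma~\ref{lemma:contraction} shows this operator contracts $|\bar\eta|+\|\eta_{\neq}\|_V$ uniformly in $\kappa$ and in $k''\neq0$; for $k''=0$ it does so on mean-free data. The argument has three parts:
\begin{itemize}
\item the mean shrinks by a factor $1-\varepsilon$, thanks to the uniform oscillatory bound $|\fint_{\T^2}\mathrm{e}^{\mathrm{i}k''\cdot g}|\le1-\varepsilon$ from Lemma~\ref{lemma:unif_bdd_from_1}; this, not a drift estimate, is where \eqref{eq:assumption_on_g} enters;
\item the oscillatory part is made $\delta$-small by Proposition~\ref{prop:BluCotGva2023} with $N$ large;
\item a $2\times2$ matrix iteration closes the argument.
\end{itemize}
Only the known two-dimensional ergodicity is used.

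If you want to keep the Harris route, you would need to take the state space $\{x'\neq y'\}$ with $V=V_2(x',y')$ and actually prove minorization over $\gtrsim d$ periods. One way is to use that the columns of $Dg(z)$, $z\in\T^2$, jointly span $\R^{d-2}$ (a consequence of \eqref{eq:assumption_on_g}), together with the submersion property of the two-dimensional two-point chain.
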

% \textcolor{red}{[Problem: $D_\kappa$ in \cite{CooIyeSon2025}'s theorem is already independent of $W$ but they don't prove it. There is an additional issue: averaging in $W$ to get the $W$-independent random variable would need a justification for the momentum bound.]}
Note that $D_\kappa>0$ and $T_\kappa^n(x)\in\T^d$ are random variables that depend both on the realisation of the Brownian motion and on the random (independent and uniformly distributed) phase shifts of the Pierrehumbert velocity field \eqref{eq:Pierrehumbert}.
When $\kappa=0$, a standard duality argument allows to infer exponential mixing.
\begin{corollary} \label{cor:exp_mixer}
    Let $\kappa=0$.
    Then, the velocity field defined in \eqref{eq:def_of_u} is a universal exponential mixer in the sense of Definition \ref{def:univ_exp_mix} with probability 1.
\end{corollary}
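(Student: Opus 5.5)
The plan is to derive Corollary \ref{cor:exp_mixer} from Theorem \ref{thm:main} with $\kappa=0$ by a duality argument, then pass from integer multiples of the period $N+1$ to all times $t\ge0$.

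\textbf{Step 1: duality at times $n(N+1)$.} For $\kappa=0$ there is no Brownian motion. By \eqref{eq:random_on_0}, $\rho(n(N+1))=\rho_0\circ (T_0^n)^{-1}$, and $T_0^n$ is a volume-preserving diffeomorphism because $u_{\underline{\omega}}$ is divergence-free. Fix a mean-free $\rho_0\in H^s$ and a test function $\phi\in H^s$; we may take $\phi$ mean-free, since $\rho(t)$ is mean-free. The change of variables $y=(T_0^n)^{-1}(x)$ gives
\begin{equation*}
\int_{\T^d}\rho(n(N+1))\,\phi\,\dd x=\int_{\T^d}\rho_0(y)\,\phi(T_0^n(y))\,\dd y .
\end{equation*}
Theorem \ref{thm:main} with $\varphi=\rho_0$, $\psi=\phi$ bounds the right-hand side by $D_0\mathrm{e}^{-\gamma n}\|\rho_0\|_{H^s}\|\phi\|_{H^s}$. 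Taking the supremum over $\|\phi\|_{H^s}\le1$ yields
\begin{equation*}
\|\rho(n(N+1))\|_{H^{-s}}\le D_0\mathrm{e}^{-\gamma n}\|\rho_0\|_{H^s}.
\end{equation*}
This holds on a single full-probability event, because $D_0$ does not depend on $\rho_0$ or $\phi$ and the bound holds almost surely for all $n$ simultaneously. Moreover $D_0<\infty$ almost surely, since $\mathbb{E}|D_0|^q<\infty$.

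\textbf{Step 2: intermediate times.} For $t\in[n(N+1),(n+1)(N+1))$, write $\rho(t)=\rho(n(N+1))\circ \Phi_{t}^{-1}$, where $\Phi_t$ is the flow from time $n(N+1)$ to time $t$. The velocity fields $g$ and the shifted sines have $C^k$ norms that are deterministic and independent of the phases. Hence $\Phi_t^{\pm1}$ are diffeomorphisms with $C^k$ norms bounded by a deterministic constant, uniformly over $n$ and over windows of length at most $N+1$. Composition with such a volume-preserving diffeomorphism is bounded on $H^s$ for every real $s$. For $s\ge0$ this follows from the chain rule and interpolation; for $H^{-s}$ it follows by duality, since the adjoint of composition with $\Phi_t^{-1}$ is composition with $\Phi_t$. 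Thus $\|\rho(t)\|_{H^{-s}}\le C\|\rho(n(N+1))\|_{H^{-s}}$ with $C$ deterministic.

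\textbf{Step 3: the rate function.} Combining the two steps gives $\|\rho(t)\|_{H^{-s}}\le C D_0\mathrm{e}^{\gamma}\mathrm{e}^{-\gamma t/(N+1)}\|\rho_0\|_{H^s}$. This is a decreasing exponential rate $r(t)$, so $u_{\underline{\omega}}$ is a universal exponential mixer almost surely. The smoothed field $\tilde u_{\underline{\omega}}$ is handled the same way, since it is a time reparametrisation of $u_{\underline{\omega}}$ when $\kappa=0$.

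The main technical point is Step 2: obtaining a uniform, phase-independent bound on the composition operators in negative Sobolev norms. The route is to prove boundedness on $H^s$ and then pass to $H^{-s}$ by duality, using that the inverse flow has equally controlled derivatives.
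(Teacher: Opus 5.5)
Your proposal is correct and follows the paper's route. The corollary comes from Theorem \ref{thm:main} at $\kappa=0$ by duality through the measure-preserving change of variables $y=(T_0^n)^{-1}(x)$, and your Step 2 (phase-independent bounds for composition with the shear flows on $H^{\pm s}$) makes explicit the passage to times that are not multiples of $N+1$, which the paper leaves inside the phrase ``standard duality argument''. One point to tighten: the single full-probability event valid for all $\rho_0$, $\phi$ and $n$ does not follow merely from $D_0$ being independent of $\rho_0,\phi$; it comes either from the proof of Theorem \ref{thm:main} (the bound holds on $\{C_0<\infty\}$) or from a countable dense family plus $L^2$-continuity of the correlation.
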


\subsection{Enhanced dissipation}
Let us now consider a sufficiently small (but positive) fixed value of the molecular diffusivity $\kappa$.
In this context, a related problem is enhanced dissipation, that is the diffusive counterpart of mixing.
In view of \eqref{eq:energy_balance}, the Poincaré inequality and the Gr\"onwall lemma give
\begin{equation*}
    \|\rho(t)\|_{L^2}\le\mathrm{e}^{-\kappa t}\|\rho_0\|_{L^2}.
\end{equation*}
Therefore, the characteristic time scale for the $L^2$-convergence of $\rho(t)$ to the mean value of $\rho_0$ is $T(\kappa)\sim\kappa^{-1}$.
Since the velocity field \eqref{eq:def_of_u} is a universal exponential mixer (cf. Corollary \ref{cor:exp_mixer}), in view of \cite[Theorem 2.5]{CotDelElg2020} the dissipative time scale can be enhanced to $T(\kappa)\sim|\log\kappa|^2$.
However, the optimal time scale for a Lipschitz continuous velocity field is $T(\kappa)\sim|\log\kappa|$ \cite{BedBluPun2021}, which we are able to obtain as a corollary of Theorem \ref{thm:main}.
\begin{corollary} \label{cor:enhanced_dissipation}
    Fix any $q,\varepsilon>0$.
    For every sufficiently small $\kappa>0$, there exist a positive random variable $D_\kappa$ with $\mathbb{E}|D_\kappa|^q<\infty$ uniformly in $\kappa$, and a constant $\gamma>0$ independent of $\kappa$ with the following property.
    There exists $t_0>0$ such that solution $\rho(t)$ of \eqref{eq:passive_scalar}-\eqref{eq:passive_scalar_ic} fulfils the almost-sure estimate
    \begin{equation*}
        \|\rho(t)\|_{L^\infty}\le\frac{D_\kappa}{\kappa^{\frac{d}{2}+\varepsilon}}\mathrm{e}^{-\gamma t}\|\rho_0\|_{L^1}\quad\forall t\ge t_0.
    \end{equation*}
\end{corollary}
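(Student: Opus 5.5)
We deduce Corollary \ref{cor:enhanced_dissipation} from Theorem \ref{thm:main} by the standard route of \cite{BedBluPun2021}: duality, then parabolic smoothing at both ends of the time interval. The solution operator $S_t$ for \eqref{eq:passive_scalar} with $\kappa>0$ is linear. Its adjoint is the solution operator of the time-reversed advection-diffusion equation, with velocity $-u(t-\cdot)$, which has the same structure. Hence, by Riesz--Thorin-type duality, it suffices to bound $\|S_t\|_{L^1_0\to L^\infty}$.

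\textbf{Step 1: correlation decay for $\kappa>0$.} By \eqref{eq:random_on_0}, for mean-free $\rho_0$ and test function $\psi$ we have
\begin{equation*}
\int \rho(n(N+1))\psi\,\mathrm{d}x=\mathbb{E}_W\int\rho_0(y)\psi(T_\kappa^n(y))\,\mathrm{d}y,
\end{equation*}
using that $T^n_\kappa$ is volume preserving. Theorem \ref{thm:main} then gives, for $\rho_0,\psi\in H^s$,
\begin{equation*}
\left|\langle\rho(n(N+1)),\psi\rangle\right|\le \mathbb{E}_W[D_\kappa]\,\mathrm{e}^{-\gamma n}\|\rho_0\|_{H^s}\|\psi\|_{H^s}.
\end{equation*}
Here we replace $D_\kappa$ by its $W$-average $\bar D_\kappa:=\mathbb{E}_W D_\kappa$. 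Its moments are controlled by Jensen's inequality: $\mathbb{E}|\bar D_\kappa|^q\le\mathbb{E}|D_\kappa|^q$, uniformly in $\kappa$. For non-integer times, we use that the $L^2$ norm does not increase on each unit interval.

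\textbf{Step 2: smoothing at the start and end.} Take $s=\varepsilon'$ small. For $t\ge t_0$, split $[0,t]$ into $[0,1]\cup[1,t-1]\cup[t-1,t]$.

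\emph{Start interval.} On $[0,1]$ the velocity is smooth with bounded Lipschitz norm. Nash/heat-kernel estimates for advection-diffusion with a divergence-free drift give $\|\rho(1)\|_{L^2}\lesssim \kappa^{-d/4}\|\rho_0\|_{L^1}$. Interpolating with the energy estimate, or using parabolic regularity, also gives $\|\rho(1)\|_{H^{s}}\lesssim\kappa^{-d/4-s/2}\|\rho_0\|_{L^1}$. The drift is bounded in $C^k$ deterministically, since the phases only shift it, so the constants are deterministic.

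\emph{End interval.} Dually, for $\psi\in L^1$, the adjoint evolution on $[t-1,t]$ maps $L^1\to H^s$ with the same bound.

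\textbf{Step 3: middle interval.} Apply Step 1 on $[1,t-1]$, which requires restarting at a time in $(N+1)\N$. We handle this by shifting the choice of the first interval to end at the next multiple of $N+1$; the preceding segment only contracts $L^2$. Combining the three pieces gives
\begin{equation*}
\|\rho(t)\|_{L^\infty}\le \bar D_\kappa\,\kappa^{-d/2-s}\mathrm{e}^{-\gamma' t}\|\rho_0\|_{L^1}.
\end{equation*}
Choosing $s=\varepsilon$ finishes the proof, with $t_0$ of order $N+3$.

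\textbf{Main obstacle.} The main issue is that the shifted-time restart changes the randomness. Theorem \ref{thm:main} is stated from time $0$, so applying it from a later time requires stationarity: the phases are i.i.d., so the shifted process has the same law. This still leaves a separate random constant $D_\kappa$ for each restart time. Since we choose one fixed restart grid, only $D_\kappa$ at a single starting offset is needed, which resolves this.

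A second subtlety is uniformity of the $L^1\to H^s$ smoothing constants, and these are deterministic, as noted in Step 2. Finally, the time-reversed problem for the end-interval estimate needs only deterministic parabolic bounds, not Theorem \ref{thm:main}.
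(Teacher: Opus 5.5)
Your proposal is correct and follows essentially the same route as the paper: deterministic smoothing $L^1\to L^2\to H^{s}$ over the first block ending at time $N+1$, then Theorem~\ref{thm:main} applied to the time-shifted composition $Z_\kappa^{n-1}\circ\cdots\circ Z_\kappa^{1}$ and averaged in $W$ (the paper uses the equality in law of the shifted process only implicitly), then smoothing $H^{-s}\to L^2\to L^\infty$ at the end, which the paper takes directly from \cite[Lemma 4.18]{BluPun2023} and \cite[Lemma 5.6]{ConKisRyzZla2008} rather than via the adjoint equation, and finally the maximum principle to cover all $t\ge N+1$. One small slip: for $q\in(0,1)$, Jensen gives $\mathbb{E}|\bar D_\kappa|^q\ge\mathbb{E}|D_\kappa|^q$, not $\le$, so for such $q$ you should instead bound $\mathbb{E}_0|\bar D_\kappa|^q\le(\mathbb{E}D_\kappa)^q$ and invoke the $q=1$ case of Theorem~\ref{thm:main}, as the paper does.
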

Note that here $D_\kappa$ is a random variable that depends on the realisation of the velocity field \eqref{eq:def_of_u} only.
The optimal time scale $T(\kappa)\sim|\log\kappa|$ is then obtained by imposing $\mathrm{e}^{-\gamma T(\kappa)}\sim\kappa^{\frac{d}{2}+\eps}$.
It is also worth mentioning that the exponential decay rate becomes independent of diffusivity for large times.

\subsection{Discussion and organisation of the paper}

Theorem \ref{thm:main} asserts the decay of the correlations
\begin{equation*}
    \operatorname{Cor}_\kappa^n(\varphi,\psi):=\left|\int_{\T^d}\varphi(x)\psi(T_\kappa^n(x))\;\mathrm{d}x\right|,\quad\varphi,\psi\in L^2(\T^d).
\end{equation*}
To show such (exponential) decay \eqref{eq:decay_of_correlations_main_thm}, a sufficient condition is the uniform geometric ergodicity of the two-point Markov process associated with the map $T_\kappa^n$.
This idea first appeared in \cite{DolKalKor2004}, and was then employed in several other works, stemming from \cite{BluCotGva2023,BedBluPun2022,BedBluPun2021}.
In our setting, we refrain from showing that two-point Markov process associated with the map $T_\kappa^n$ is uniformly geometrically ergodic.
Instead, we solely rely on the uniform geometric ergodicity of the two-point Markov process associated with the two-dimensional Pierrehumbert model.
The decay of correlations on two-dimensional slices is then propagated to higher dimensions by means of a suitably defined contraction map.
\begin{remark}
    To prove that a given $d$-dimensional velocity field induces exponential decay of correlations, the standard approach is to use the machinery developed by Blumenthal--Coti Zelati--Gvalani \cite{BluCotGva2023}.
    In particular, when applying the Furstenberg criterion, one has to check that the Jacobian matrix of the transport map with respect to the noise space is full-rank and that its restriction to noise perturbations that leave the endpoint unchanged to first order is surjective.
    This also ensures that the small set property holds in the setting of the Harris theorem.
    Even with a suitably large noise space, the number of columns of the Jacobian matrix scales quadratically with the space dimension $d$ (while the number of rows scales linearly), making it increasingly costly to check the full-rank condition.
    We emphasise that our construction with the velocity field \eqref{eq:def_of_u} described above allows us to circumvent such problem.
\end{remark}
The paper is organised as follows.
In Section \ref{sec:preliminaries}, we set the notation and collect all the results that will be needed to prove Theorem \ref{thm:main} and Corollaries \ref{cor:exp_mixer} and \ref{cor:enhanced_dissipation}.
More specifically, we devote Section \ref{subsec:choice_of_g} to the choice of the shear $(\boldsymbol{0}_2,g)$ in \eqref{eq:def_of_u}, stating the fundamental property Lemma \ref{lemma:unif_bdd_from_1} and providing an explicit Example \ref{example:g}.
Next, in Section \ref{subsec:2d} we define a suitable norm $\|\cdot\|$ for functions on $\T^2\times\T^2$ and recall the fundamental result of \cite{CooIyeSon2025} for uniform-in-$\kappa$ ergodicity in the Pierrehumbert model.
We then prove in Section \ref{subsec:contraction_map} that a suitably defined map is a contraction with respect to $\|\cdot\|$.
Finally, we devote Section \ref{sec:proofs} to the proof of the uniform-in-$\kappa$ decay of correlations \eqref{eq:decay_of_correlations_main_thm} in Theorem \ref{thm:main} (whence Corollary \ref{cor:exp_mixer} immediately follows), and to the proof of the enhanced dissipation result in Corollary \ref{cor:enhanced_dissipation}.
This is done in Sections \ref{subsec:mixing} and \ref{subsec:enh_diss} respectively.

\section{Preliminaries} \label{sec:preliminaries}

Consider two probability spaces $(\Omega_W,\mathcal{F}_W,\mathbb{P}_W)$ and $(\Omega_0,\mathcal{F}_0,\mathbb{P}_0)$, with the product space defined as
\begin{equation*}
    \Omega=\Omega_W\times\Omega_0,\quad\mathcal{F}=\mathcal{F}_W\otimes\mathcal{F}_0,\quad\mathbb{P}=\mathbb{P}_W\otimes\mathbb{P}_0.
\end{equation*}
Correspondingly, we will denote by $\mathbb{E}_W$, $\mathbb{E}_0$ and $\mathbb{E}$ the expected values.
We choose $\Omega_0=([0,2\pi)^2)^\N$ with $\mathbb{P}_0$ being the uniform probability distribution.
Moreover, let $W_t$ denote a $\T^d$-valued Brownian motion on the space $(\Omega_W,\mathcal{F}_W,\mathbb{P}_W)$.
For $\kappa\ge0$, $n\in\N$ and $\ell\in\{1,\dots,N\}$, let the random variables $X_t^{\kappa,n,\ell},Y_t^{\kappa,n}:\T^d\to\T^d$ on $(\Omega,\mathcal{F},\mathbb{P})$ solve the SDEs
\begin{align*}
    \mathrm{d}Y_t^{\kappa,n}(x)=&u_{\underline{\omega}}\left(n(N+1)+t,Y_t^{\kappa,n}(x)\right)\mathrm{d}t+\sqrt{2\kappa}\mathrm{d}W_{n(N+1)+t},&Y_0^{\kappa,n}(x)=x,\quad&t\in[0,1],\\
    \mathrm{d}X_t^{\kappa,n,\ell}(x)=&u_{\underline{\omega}}\left(n(N+1)+\ell+t,X_t^{\kappa,n,\ell}(x)\right)\mathrm{d}t+\sqrt{2\kappa}\mathrm{d}W_{n(N+1)+\ell+t},&X_0^{\kappa,n,\ell}(x)=x,\quad&t\in[0,1],
\end{align*}
with $u_{\underline{\omega}}$ given in \eqref{eq:def_of_u}.
We define now
\begin{equation*}
    Z_\kappa^n:=X_1^{\kappa,n,N}\circ\cdots\circ X_1^{\kappa,n,1}\circ Y_{1}^{\kappa,n},\qquad T_\kappa^0:=\operatorname{Id},\qquad  T_\kappa^{n+1}:=Z_\kappa^n\circ Z_\kappa^{n-1}\circ\cdots\circ Z_\kappa^0,
\end{equation*}
so that \eqref{eq:random_on_0} holds.

\subsection{Choice of the shear} \label{subsec:choice_of_g}

In this section, we are concerned with the condition \eqref{eq:assumption_on_g} that the function $g:\T^2\to\R^{d-2}$ in \eqref{eq:def_of_u} needs to satisfy.
The main reason for such an assumption is that we want to apply the following lemma.
\begin{lemma} \label{lemma:unif_bdd_from_1}
    Let $m,p,r\in\N$.
    Consider a smooth function $g:\T^m\to\R^p$ satisfying
    \begin{equation}
        \operatorname{Span}\biggl\{\partial^{\alpha}g(x)\,\bigg|\,1\le|\alpha|\le r,\,\alpha\in\N^m\biggr\}=\R^p\quad\forall x\in\T^m.
        \label{eq:assumption_on_g_mn}
    \end{equation}
    Then, there exist $\varepsilon,\delta>0$ such that for any smooth $f:\T^m\to\R^p$ satisfying $\|f-g\|_{\mathcal{C}^{r}}\le\delta$ it holds
    \begin{equation*}
        \frac{1}{(2\pi)^m}\left|\int_{\T^m}\mathrm{e}^{\mathrm{i}k\cdot f(x)}\;\mathrm{d}x\right|\le1-\varepsilon\quad\forall k\in\Z_0^{p}:=\Z^{p}\setminus\{\boldsymbol{0}_{p}\}.
    \end{equation*}
\end{lemma}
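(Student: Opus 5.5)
The plan is to split the frequencies $k\in\Z^p_0$ into a finite set of low frequencies $0<|k|\le K$ and the high frequencies $|k|>K$. The high frequencies are handled by a uniform van der Corput estimate, which gives decay like $|k|^{-1/r}$ uniformly in $f$. The finitely many low frequencies are handled by a strict inequality for $g$ followed by a perturbation argument. Throughout, write $I_f(k):=(2\pi)^{-m}\int_{\T^m}\mathrm{e}^{\mathrm{i}k\cdot f(x)}\dd x$.

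\emph{Step 1 (quantitative nondegeneracy).} For a unit vector $e\in\Sp^{p-1}$, assumption \eqref{eq:assumption_on_g_mn} says that for every $x$ some $\partial^\alpha(e\cdot g)(x)$ with $1\le|\alpha|\le r$ is nonzero. The function
\begin{equation*}
(x,e)\mapsto \max_{1\le|\alpha|\le r}|\partial^\alpha(e\cdot g)(x)|
\end{equation*}
is continuous and positive on the compact set $\T^m\times\Sp^{p-1}$, so it is bounded below by some $2c>0$.

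By polarization, for each order $j$ the span of $\{\partial^\alpha: |\alpha|=j\}$ coincides with the span of the pure directional derivatives $(v\cdot\nabla)^j$, $v$ ranging over a fixed finite set of unit vectors. Hence there is $c'>0$ with the following property: for each $(x,e)$ there exist $v$ in this finite set and $1\le j\le r$ with $|(v\cdot\nabla)^j(e\cdot g)(x)|\ge 2c'$.

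By uniform continuity, this lower bound persists, with $2c'$ replaced by $c'$, on a neighbourhood of $(x,e)$ of size independent of the point. It also persists for every $f$ with $\|f-g\|_{\mathcal C^r}\le\delta$, provided $\delta$ is small compared with $c'$. Compactness then yields a finite cover of $\T^m\times\Sp^{p-1}$ by product sets $B_i\times E_i$ with associated data $(v_i,j_i)$.

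\emph{Step 2 (high frequencies).} Take a smooth partition of unity $\{\chi_i\}$ on $\T^m$ subordinate to the balls $B_i$. Write $k=|k|e$, and consider only the indices $i$ with $e\in E_i$. On each $B_i$, integrate first along lines parallel to $v_i$, where the phase $|k|\,e\cdot f$ has $j_i$-th derivative of size at least $c'|k|$. The one-dimensional van der Corput lemma with amplitude $\chi_i$ (for $j_i=1$ monotonicity of the derivative is not needed once we integrate by parts, or we simply use the amplitude version) gives a bound $C|k|^{-1/j_i}\le C|k|^{-1/r}$. Here $C$ depends only on $c'$, on $\|\chi_i\|_{\mathcal C^1}$ and on $\|f\|_{\mathcal C^r}\le\|g\|_{\mathcal C^r}+\delta$, and not on $f$ itself. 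Summing over the finitely many $i$ gives $|I_f(k)|\le C|k|^{-1/r}$. Choosing $K$ with $CK^{-1/r}\le\frac12$ settles all $|k|>K$.

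I expect this step to be the main obstacle. The delicate points are keeping all constants uniform in $f$ and in the direction $e$, and handling the line integrals over the torus when $v_i$ is not a coordinate direction. I would try first to circumvent the latter by working in local charts on small balls, so that the torus geometry is irrelevant.

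\emph{Step 3 (low frequencies).} Fix $k$ with $0<|k|\le K$. If $|I_g(k)|=1$, then $\mathrm{e}^{\mathrm{i}k\cdot g}$ would be a.e. equal to a constant of modulus one. By continuity $k\cdot g$ would then be constant, so all derivatives of $k\cdot g$ would vanish, contradicting \eqref{eq:assumption_on_g_mn}. Hence $|I_g(k)|\le 1-2\varepsilon_k$ for some $\varepsilon_k>0$. Moreover,
\begin{equation*}
|I_f(k)-I_g(k)|\le |k|\,\|f-g\|_{\mathcal C^0}\le K\delta .
\end{equation*}
Taking $\varepsilon:=\min\bigl\{\tfrac12,\min_{0<|k|\le K}\varepsilon_k\bigr\}$ over this finite set, and shrinking $\delta$ so that $K\delta\le\varepsilon$, gives $|I_f(k)|\le1-\varepsilon$ for all $k\in\Z^p_0$.
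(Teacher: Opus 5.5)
Your proposal is correct and follows essentially the paper's route: a compactness cover of $\T^m\times\Sp^{p-1}$, reduction to pure directional derivatives (the paper cites the Banach--Harris polarization inequality for this), one-dimensional van der Corput along lines in local charts with integration by parts when the order is one, and separate handling of finitely many low frequencies. Your Step 3 proves the uniformity in $f$ of the low-frequency bound via $|I_f(k)-I_g(k)|\le|k|\,\|f-g\|_{\mathcal C^0}$, which the paper only asserts.

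One bookkeeping fix is needed in Step 2. A single partition of unity on $\T^m$ subordinate to all the $B_i$ does not sum to one over just the indices with $e\in E_i$. Either take the partition on $\T^m\times\Sp^{p-1}$ subordinate to the $B_i\times E_i$ and freeze $e$, as the paper does. Or use your uniform radius to build a grid cover $B_i\times E_l$, so that one $e$-independent partition $\{\chi_i\}$ works for every $e\in E_l$.
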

\begin{proof}
    Since for any $k\in\Z_0^{p}$ there exists $\varepsilon_k>0$ such that
    \begin{equation*}
        \frac{1}{(2\pi)^m}\left|\int_{\T^m}\mathrm{e}^{\mathrm{i}k\cdot f(x)}\;\mathrm{d}x\right|\le1-\varepsilon_k,
    \end{equation*}
    it is enough to show that
    \begin{equation}
        \left|\int_{\T^m}\mathrm{e}^{\mathrm{i}k\cdot f(x)}\;\mathrm{d}x\right|\to0\quad\text{as }|k|\to\infty.
        \label{eq:uniform_step_m1}
    \end{equation}
    We present here the proof in the simplified setting $m=1$, i.e., with $f,g$ depending on one variable $x\in\T$ only, which is the case in Example \ref{example:g} below.
    Then, we will briefly mention how to extend the proof to any $m\in\N$, see Remark \ref{rmk:m_generic}.
    
    Let us first prove the claim \eqref{eq:uniform_step_m1} for $f\equiv g$.
    In view of \eqref{eq:assumption_on_g_mn}, for any $x\in\T$ and any $\xi\in\Sp^{p-1}$, there exists $q\in\{1,\dots,r\}$ such that
    \begin{equation*}
        \left|\xi\cdot \frac{\mathrm{d}^qg}{\mathrm{d}x^q}(x)\right|=\left|\frac{\mathrm{d}^q}{\mathrm{d}x^q}(\xi\cdot g(x))\right|>0.
    \end{equation*}
    Since $g$ is smooth, for any $(\bar x,\bar\xi)\in\T\times\Sp^{p-1}$ there exist a constant $c>0$, an index $q\in\{1,\dots,r\}$, and an open neighbourhood $\Gamma$ of $(\bar x,\bar\xi)$ such that
    \begin{equation*}
        \left|\frac{\mathrm{d}^{q}}{\mathrm{d}x^{q}}(\xi\cdot g(x))\right|\ge c\quad\forall(x,\xi)\in \Gamma.
    \end{equation*}
    Since $\T\times\Sp^{p-1}$ is compact, finitely many of such neighbourhoods $\Gamma$ form a finite open cover of $\T\times\Sp^{p-1}$.
    Namely, for some $J\in\N$ depending on $g$ only, there exist constants $\{c_j\}_{j=1}^J\subset(0,\infty)$, indices $\{q_j\}_{j=1}^J\subset\{1,\dots,r\}$, and a finite open cover $\{\Gamma_j\subset\T\times\Sp^{p-1}\}_{j=1}^J$ such that
    \begin{equation*}
        \left|\frac{\mathrm{d}^{q_j}}{\mathrm{d}x^{q_j}}(\xi\cdot g(x))\right|\ge c_j\quad\forall(x,\xi)\in \Gamma_j.
    \end{equation*}
    Up to increasing $J$, it is then possible to choose finite open covers $\{U_j\}_{j=1}^J,\{V_j\}_{j=1}^J$ of $\T$ and $\Sp^{p-1}$ respectively such that
    % $\Gamma_j\subset U_j\times V_j$ for any $j$.
    %  and it holds (up to re-labelling)
    \begin{equation}
        \left|\frac{\mathrm{d}^{q_j}}{\mathrm{d}x^{q_j}}(\xi\cdot g(x))\right|\ge c_j\quad\forall(x,\xi)\in U_j\times V_j.
        \label{eq:uniform_step_0}
    \end{equation}
    Clearly, $\{U_j\times V_j\}_{j=1}^J$ is then a finite cover of $\T\times\Sp^{p-1}$.
    Consider now the partition of unity $\left\{\rho_j:\T\times\Sp^{p-1}\to[0,1]\right\}_{j=1}^J$ subordinated to the cover $\{U_j\times V_j\}_{j=1}^J$, i.e., $\operatorname{supp}\rho_j\subset U_j\times V_j$ for any $j$.
    Note that, for any $\bar\xi\in\Sp^{p-1}$ fixed, $\left\{\rho_j(\cdot,\bar\xi):\T\to[0,1]\right\}_{j=1}^J$ is a partition of unity subordinated to the cover $\{U_j\}_{j=1}^J$ of $\T$.
    In particular, $\operatorname{supp}\rho_j(\cdot,\bar\xi)$ is compact.
    For any $\lambda\ge1$ and any $\xi\in\Sp^{p-1}$, consider the integral
    \begin{equation*}
        I_\xi(\lambda):=\int_{\T}\mathrm{e}^{\mathrm{i}\lambda\xi\cdot g(x)}\;\mathrm{d}x=\sum_{j=1}^J\int_{U_j}\mathrm{e}^{\mathrm{i}\lambda\xi\cdot g(x)}\rho_j(x,\xi)\;\mathrm{d}x.
    \end{equation*}
    We claim that for any $j$ there exists a constant $C_j>0$ such that
    \begin{equation}
        \left|\int_{U_j}\mathrm{e}^{\mathrm{i}\lambda\xi\cdot g(x)}\rho_j(x,\xi)\;\mathrm{d}x\right|\le C_j\lambda^{-1/q_j}\quad\forall\lambda\ge1,\,\forall\xi\in\Sp^{p-1}.
        \label{eq:uniform_step_1}
    \end{equation}
    As a consequence, denoting by $|\cdot|$ the Lebesgue measure of a set and $C:=\max_jC_j>0$, we get
    \begin{equation*}
        |I_\xi(\lambda)|\le C\sum_{j=1}^J\left|U_j\right|\lambda^{-1/q_j}\le2\pi CJ\lambda^{-1/r}.
    \end{equation*}
    Taking now $\lambda=|k|$ and $\xi=k/|k|$ yields the thesis.
    Therefore, it remains to prove the claim \eqref{eq:uniform_step_1}.
    To this end, we consider two different cases: $q_j=1$ and $q_j\ge2$.

    In the latter case $q_j\ge2$, together with \eqref{eq:uniform_step_0}, the Corollary on page 334 of \cite{Ste1993} gives the existence of a constant $K_j$ independent of $\lambda,\xi$ such that
    \begin{equation*}
        \left|\int_{U_j}\mathrm{e}^{\mathrm{i}\lambda\xi\cdot g(x)}\rho_j(x,\xi)\;\mathrm{d}x\right|\le K_j|c_j\lambda|^{-1/q_j}\int_{U_j}\left|\partial_x\rho_j(x,\xi)\right|\;\mathrm{d}x.
    \end{equation*}
    Since $\rho_j(\cdot,\xi)$ is a smooth function and its support is a compact subset of $U_j$, \eqref{eq:uniform_step_1} follows.
    
    Let us now address the case $q_j=1$, for which an integration by parts yields
    \begin{equation*}
        \int_{U_j}\mathrm{e}^{\mathrm{i}\lambda\xi\cdot g(x)}\rho_j(x,\xi)\;\mathrm{d}x=\frac{\mathrm{i}}{\lambda}\int_{U_j}\mathrm{e}^{\mathrm{i}\lambda\xi\cdot g(x)}\partial_x\left(\frac{\rho_j(x,\xi)}{\partial_x(\xi\cdot g(x))}\right)\;\mathrm{d}x,
    \end{equation*}
    whence the estimate
    \begin{equation*}
        \left|\int_{U_j}\mathrm{e}^{\mathrm{i}\lambda\xi\cdot g(x)}\rho_j(x,\xi)\;\mathrm{d}x\right|\le2\pi\lambda^{-1}\max_{x\in\operatorname{supp}\rho_j(\cdot,\xi)}\left|\partial_x\left(\frac{\rho_j(x,\xi)}{\partial_x(\xi\cdot g(x))}\right)\right|.
    \end{equation*}
    In view of \eqref{eq:uniform_step_0}, the function inside the absolute value is smooth and its support is a subset of $\operatorname{supp}\rho_j(\cdot,\xi)$.
    Therefore, \eqref{eq:uniform_step_1} follows for $q_j=1$ as well, concluding the proof in the case $f\equiv g$.
    
    Finally, if $f\not\equiv g$, it is enough to observe the following.
    Together with \eqref{eq:uniform_step_0}, the uniform bound $\|f-g\|_{\mathcal{C}^{r}}\le\delta$ implies
    \begin{equation}
        \left|\frac{\mathrm{d}^{q_j}}{\mathrm{d}x^{q_j}}(\xi\cdot f(x))\right|\ge\frac{c_j}{2}\quad\forall(x,\xi)\in U_j\times V_j,
        \label{eq:uniform_step_2}
    \end{equation}
    up to choosing $\delta>0$ small enough.
    Here, $c_j$ is the same constant appearing in \eqref{eq:uniform_step_0}, thus it depends on $g$ only, and is independent of $f$.
    Once we have \eqref{eq:uniform_step_2}, the same argument above applies with $f$ in place of $g$.
    This concludes the proof of Lemma \ref{lemma:unif_bdd_from_1}.
\end{proof}
\begin{remark}[Generalisation to any $m\in\N$] \label{rmk:m_generic}
    We briefly sketch here how to extend the proof of Lemma \ref{lemma:unif_bdd_from_1} to any $m\in\N$.
    If $m\ge2$, for any $j=1,\dots,J$ there exist a direction $v_j\in\Sp^{m-1}$ and a differentiation order $q_j\in\N$, $1\le q_j\le r$, such that
    \begin{equation*}
        |(v_j\cdot\nabla)^{q_j}\xi\cdot g(x)|\ge c_j\quad\forall(x,\xi)\in U_j\times V_j.
    \end{equation*}
    Indeed, for any smooth function $h:\R^m\to\R^p$ we have the pointwise estimate (see \cite{Ban1938,Har1996})
    \begin{align*}
        \max_{\substack{v\in\Sp^{m-1}}}|(v\cdot\nabla)^qh|\ge\max_{\substack{|\alpha|=q\\\alpha\in\N^m}}|\partial^\alpha h|\quad\forall q\in\N.
    \end{align*}
    It is then possible to choose $U_j\subset\T^m$ small enough in such a way that there exists a smooth change of coordinates from $x\in U_j$ to $(y,y')\in O_j\times O_j'\subset[-1,1]\times[-1,1]^{m-1}$, where $O_j$ is an open connected interval and $O_j'$ is the Cartesian product of $m-1$ open connected intervals.
    Here, $y$ denotes the coordinate in the direction $v_j\in\Sp^{m-1}$, so that $x=x_j+yv_j+M_jy'$ for some $x_j\in U_j$ and some matrix $M_j\in\R^{m\times(m-1)}$ whose columns are unit vectors such that
    \begin{equation*}
        \operatorname{Span}\{v_j\}\oplus\operatorname{col}(M_j)=\R^m,
    \end{equation*}
    where $\operatorname{col}(\cdot)$ denotes the range of a linear application.
    Therefore, for any $j$ there exists a smooth, compactly supported function $a_j:O_j\times O_j'\times\Sp^{p-1}\to\R$ such that
    \begin{equation*}
        I_\xi(\lambda)=\sum_{j=1}^J\int_{O_j'}\biggl(\int_{O_j}\mathrm{e}^{\mathrm{i}\lambda\xi\cdot g(x_j+yv_j+M_jy')}a_j(y,y',\xi)\;\mathrm{d}y\biggr)\mathrm{d}y',
    \end{equation*}
    appealing to Fubini's theorem.
    The same argument in the proof of Lemma \ref{lemma:unif_bdd_from_1} then applies to the one-dimensional integral in the scalar variable $y$.
\end{remark}
We now conclude the section by providing an example of a function $g:\T^2\to\R^{d-2}$ that fulfils \eqref{eq:assumption_on_g}.
In particular, our function $g$ depends on the first variable $x_1$ only, as in the simplified setting of the proof of Lemma \ref{lemma:unif_bdd_from_1}.
\begin{example} \label{example:g}
    Consider $g(x_1,x_2)=(\sin x_1,\sin(2x_1),\dots,\sin((d-2)x_1))$.
    Then, \eqref{eq:assumption_on_g} holds with $r=2(d-2)$.

    Indeed, by contradiction assume that for some $\bar{x}\in\T$ there exists $a\in\R^{d-2}\setminus\{\boldsymbol{0}_{d-2}\}$ such that for every $j\in\{1,\dots,2(d-2)\}$ it holds
    \begin{equation*}
        a\cdot g^{(j)}(\bar{x})=0,
    \end{equation*}
    where, with a slight abuse of notation we still denote by $g$ the function of one variable $g(x)=(\sin x,\dots,\sin((d-2)x))$.
    Therefore, the function
    \begin{equation*}
        F(x):=\sum_{\ell=1}^{d-2}a_\ell\ell\cos(\ell x),\quad x\in\T
    \end{equation*}
    satisfies
    \begin{equation}
        F(\bar{x})=F'(\bar{x})=F''(\bar{x})=\ldots=F^{(2(d-2)-1)}(\bar{x})=0.
        \label{eq:ics}
    \end{equation}
    Observe that $F$ also fulfils the $2(d-2)$-th order ODE
    \begin{equation*}
        \left(\frac{\mathrm{d}^2}{\mathrm{d}x^2}+1^2\right)\left(\frac{\mathrm{d}^2}{\mathrm{d}x^2}+2^2\right)\cdots\left(\frac{\mathrm{d}^2}{\mathrm{d}x^2}+(d-2)^2\right)F(x)=0\quad\forall x\in\T.
    \end{equation*}
    Together with the initial conditions \eqref{eq:ics}, this entails $F\equiv0$, that is $a=\boldsymbol{0}_{d-2}$.
\end{example}

\subsection{The two-dimensional case} \label{subsec:2d}
Our argument relies on known results for universal exponential mixers on $\T^2$.
For this reason, the first two coordinates $(x_1,x_2)$ of $\T^d$ will play a different role compared to the last $d-2$.
Bearing this in mind, we recall in this section the fundamental ergodicity property of the two-dimensional Pierrehumbert model, and we introduce a norm for functions defined over $\T^2\times\T^2$ with respect to which the map $\mathcal{T}_k^\kappa$ defined in Section \ref{subsec:contraction_map} is a contraction.

Let $P_{\kappa,n,\ell}^{(2)}$ denote the two-point transition kernel associated to one step of the Pierrehumbert velocity field \eqref{eq:Pierrehumbert} on the time interval $[n(N+1)+\ell,n(N+1)+\ell+1)$ for $\ell\in\{1,\dots,N\}$.
Namely, $P_{\kappa,n,\ell}^{(2)}$ acts on continuous bounded functions $\eta\in\mathcal{C}^0_B(\mathbb{T}^2\times\mathbb{T}^2;\mathbb{C})$ as
\begin{equation*}
    \eta\mapsto P_{\kappa,n,\ell}^{(2)}\eta:=\mathbb{E}\biggl[\eta\circ\biggl(\begin{bmatrix}
        (X_{1}^{\kappa,n,\ell})_1\\
        (X_{1}^{\kappa,n,\ell})_2
    \end{bmatrix},\begin{bmatrix}
        (X_{1}^{\kappa,n,\ell})_1\\
        (X_{1}^{\kappa,n,\ell})_2
    \end{bmatrix}\biggr)\biggr]\in\mathcal{C}^0_B(\mathbb{T}^2\times\mathbb{T}^2;\mathbb{C}).
\end{equation*}
% Here, the expectation in taken with respect to both the Brownian motion and the randomness in $u$, and we have employed the notation
% \begin{align*}
%     X_{1/N}^{\kappa,0}\times X_{1/N}^{\kappa,0}:\T^2\times\T^2&\to\T^2\times\T^2,\\
%     (x,y)&\mapsto X_{1/N}^{\kappa,0}\times X_{1/N}^{\kappa,0}(x,y)=\begin{bmatrix}
%         X_{1/N}^{\kappa,0}(x)\\
%         X_{1/N}^{\kappa,0}(y)
%     \end{bmatrix}.
% \end{align*}
Given a function $V:\T^2\times\T^2\setminus\{(x,x)\mid x\in\T^2\}\to[1,\infty)$, we define the norm
\begin{equation*}
    \|\eta\|_V:=\sup_{\substack{x,y\in\mathbb{T}^2\\x\ne y}}\frac{|\eta(x,y)|}{V(x,y)},\qquad\eta\in\mathcal{C}^0_B(\T^2\times\T^2;\mathbb{C}),
\end{equation*}
and given any $h:\mathbb{T}^2\to\mathbb{C}$ we define $h^{(2)}:\T^2\times\T^2\to\mathbb{C}$ by
\begin{equation*}
    h^{(2)}(x,y):=h(x)\overline{h(y)}.
\end{equation*}
Finally, we denote
\begin{equation*}
    (P^{(2)}_{\kappa,n})^\ell:=P^{(2)}_{\kappa,n,\ell}\circ\cdots\circ P^{(2)}_{\kappa,n,1}.
\end{equation*}
The building block of our argument is the $V$-uniform geometric ergodicity of $(P_{\kappa,n}^{(2)})^\ell$, which was proved in \cite{BluCotGva2023} for $\kappa=0$ and then extended uniformly to sufficiently small $\kappa\ge0$ in \cite[Lemma 3.2]{CooIyeSon2025}.
We report such result here for the reader's convenience.
\begin{proposition}[\cite{CooIyeSon2025}] \label{prop:BluCotGva2023}
    There exist constants $C,\gamma>0$ and a function $V\in\mathcal C^0(\T^2\times\T^2\setminus\{(x,x)\mid x\in\T^2\})\cap L^1(\mathbb{T}^2\times\mathbb{T}^2)$ such that $V\ge1$ almost everywhere with the following property.
    For any $n\in\N$, any $\ell\in\{1,\dots,N\}$, any mean-free function $\eta$ with $\|\eta\|_V<\infty$, and any $\kappa\ge0$ small enough, it holds
    \begin{equation*}
        \left\|(P_{\kappa,n}^{(2)})^\ell\eta\right\|_V\le C\mathrm{e}^{-\gamma\ell}\|\eta\|_V.
    \end{equation*}
\end{proposition}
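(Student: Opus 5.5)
The plan is to prove the bound by the Harris theorem in its weighted ($V$-norm) form, applied to a single two-point kernel, keeping every constant uniform in $\kappa$. The first observation is that, for fixed $\kappa$, the kernels $P^{(2)}_{\kappa,n,\ell}$ do not depend on $(n,\ell)$. The phases $\omega_n$ are i.i.d., the Brownian increments on disjoint unit time intervals are independent and identically distributed, and both points of the two-point process are driven by the \emph{same} increment. Hence $(P^{(2)}_{\kappa,n})^\ell=(P^{(2)}_\kappa)^\ell$ for a single Markov kernel $P^{(2)}_\kappa$ on $\T^2\times\T^2\setminus\{(x,x)\mid x\in\T^2\}$.

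Since the flow is volume-preserving and the noise is additive, Lebesgue measure on $\T^2\times\T^2$ is invariant. By ``mean-free'' I will mean mean-free with respect to this measure, which is integrable against $V$ because $V\in L^1$. The target then becomes the two standard Harris hypotheses, each with constants independent of $\kappa$ small:
\begin{itemize}
\item a drift condition $P^{(2)}_\kappa V\le\alpha V+b$ with $\alpha<1$;
\item a minorisation $(P^{(2)}_\kappa)^{m}((x,y),\cdot)\ge\beta\,\nu(\cdot)$ on every sublevel set $\{V\le R\}$.
\end{itemize}
Harris's theorem then gives $\|(P^{(2)}_\kappa)^\ell\eta\|_V\le C\mathrm{e}^{-\gamma\ell}\|\eta\|_V$ for all $\ell$, and in particular for $\ell\le N$.

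For the drift condition, I would follow \cite{BluCotGva2023}. Take $V(x,y)=|x-y|^{-p}\psi_p(x,\tfrac{x-y}{|x-y|})$ near the diagonal, glued smoothly to a constant away from it. Here $\psi_p$ is the principal eigenfunction of the twisted projective operator $\hat P^p f(x,v)=\mathbb{E}[|D\Phi(x)v|^{-p}f(\Phi(x),D\Phi(x)v/|D\Phi(x)v|)]$. Positivity of the top Lyapunov exponent $\lambda_1>0$ for the Pierrehumbert cocycle follows from the Furstenberg criterion, using nondegeneracy of the random phases. It implies that the eigenvalue $\mathrm{e}^{-\Lambda(p)}$ of $\hat P^p$ is $<1$ for small $p>0$, and this gives $P^{(2)}_\kappa V\le \alpha V$ near the diagonal. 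The key point for $\kappa>0$ is that the noise is common to both points. Consequently $x-y$ evolves only through the difference of the drifts, and the linearisation at the diagonal is the same cocycle, now evaluated along a noise-translated base trajectory. The projective process therefore changes continuously in $\kappa$, with a uniform spectral gap for $\hat P^p_\kappa$, so $\Lambda_\kappa(p)$ stays bounded away from $0$ for $\kappa\le\kappa_0$. I would also control the nonlinear error $|\Phi(x)-\Phi(y)-D\Phi(x)(x-y)|\lesssim|x-y|^2$ uniformly, using smoothness of the shears.

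For the minorisation, the plan is to use that $\kappa$ only adds a Gaussian convolution in a common direction and can only help irreducibility. For $\kappa=0$, I would show that for $m$ steps the map from phases $(\omega_1,\dots,\omega_m)$ to the endpoint $(\Phi(x),\Phi(y))\in\T^4$ is a submersion at some noise value, locally uniformly on $\{V\le R\}$ (a compact set away from the diagonal). This yields a lower bound on the density, which persists under small perturbation in $\kappa$ by continuity of the kernels in total variation on compact sets. An approximate-controllability argument then lets one reach a common open set from every starting pair.

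The main obstacle I expect is uniformity in $\kappa$ of the drift near the diagonal, i.e.\ continuity in $\kappa$ of the spectral data of the twisted projective semigroup. My first attempt would be to view $\hat P^p_\kappa$ as a small perturbation of $\hat P^p_0$ in operator norm on $\mathcal C^0$ and apply Kato perturbation theory to the isolated principal eigenvalue. Because the noise is common, the perturbation is only a translation average, so the difference $\hat P^p_\kappa-\hat P^p_0$ should be $O(\sqrt\kappa)$ in norm.
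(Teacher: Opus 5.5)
The paper gives no proof here. It imports the result from \cite[Lemma 3.2]{CooIyeSon2025}, which extends the $\kappa=0$ Harris conditions of \cite{BluCotGva2023} to small $\kappa$. Your overall architecture is the right one: a single kernel independent of $(n,\ell)$, invariant Lebesgue measure, drift from the twisted projective eigenfunction, minorisation from a submersion, then weighted Harris. However, the step you single out as the crux would fail as proposed.

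\textbf{The failing step.} The claim $\|\hat P^p_\kappa-\hat P^p_0\|_{\mathcal C^0\to\mathcal C^0}=O(\sqrt\kappa)$ is false. For a kernel operator on continuous functions of a compact space, this norm equals the supremum over $(x,v)$ of the total variation of the difference of the weighted one-step laws. For $\kappa=0$, the law of $(\Phi(x),[D\Phi(x)v])$ is the image of the two-dimensional phase space $[0,2\pi)^2$. It is therefore carried by a surface in the three-dimensional bundle $\T^2\times\mathbb{P}^1$, while for $\kappa>0$ the Brownian increments spread it off that surface. Test against a continuous $0\le f\le 1$ that equals $1$ on the surface and is supported in a neighbourhood of width much smaller than $\sqrt\kappa$. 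This shows the difference has norm of order one for every $\kappa>0$, so Kato perturbation theory cannot be applied.

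What does hold is only the weak bound $\|(\hat P^p_\kappa-\hat P^p_0)f\|_{\mathcal C^0}\lesssim\sqrt\kappa\,\|f\|_{\mathrm{Lip}}$. Turning that into continuity of the principal eigenvalue would need Keller--Liverani-type uniform Lasota--Yorke bounds on a Lipschitz space, which your plan does not supply. There is a related problem: building $V$ from the eigenfunction of $\hat P^p_\kappa$ makes $V$ depend on $\kappa$, whereas the statement asks for one $V$ valid for all small $\kappa$.

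\textbf{The repair.} Avoid spectral perturbation entirely.
\begin{enumerate}
\item Fix $V=|x-y|^{-p}\psi_p(x,\frac{x-y}{|x-y|})$ near the diagonal, with $\psi_p$ the $\kappa=0$ eigenfunction. It is continuous and bounded below by a positive constant on the compact bundle.
\item Because the noise is common and additive, $D\Phi^\kappa$ and $D^2\Phi^\kappa$ obey the same deterministic bounds as for $\kappa=0$. Moreover, $\Phi^\kappa\to\Phi^0$ in $\mathcal C^1$ outside an event of small probability: the error is $\lesssim\kappa^{1/4}$ on $\{\max_{t}|W_t|\le\kappa^{-1/4}\}$, exactly as in the proof of Lemma \ref{lemma:contraction}.
\item Uniform continuity of $\psi_p$ then gives $\hat P^p_\kappa\psi_p\to\hat P^p_0\psi_p=\mathrm{e}^{-\Lambda(p)}\psi_p$ uniformly. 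This is convergence on one fixed function, which is all the drift estimate needs.
\item Combined with your uniform quadratic remainder, this yields $P^{(2)}_\kappa V\le\alpha V+b$ with $\alpha<1$ and $b$ independent of $\kappa$.
\end{enumerate}

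For the minorisation, you likewise do not need total-variation continuity of the kernels in $\kappa$. A quantitative inverse function theorem shows that the phase-to-endpoint submersion survives $\mathcal C^1$-small perturbations, uniformly for Brownian paths in a high-probability event. This gives a density lower bound on a fixed ball, uniform in small $\kappa$.
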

In our applications, we will always choose $N$ in \eqref{eq:def_of_u} large enough in order to make the rate $C\mathrm{e}^{-\gamma N}$ arbitrarily small.
Here and in the following, we employ the notation $\|\cdot\|$ for the norm
\begin{equation*}
    \|\eta\|:=|\bar{\eta}|+\|\eta_{\neq}\|_V,
\end{equation*}
where $\bar{\eta}:=\fint_{\T^2\times\T^2}\eta=(2\pi)^{-4}\int_{\T^2\times\T^2}\eta$ is the average and $\eta_{\ne}:=\eta-\bar{\eta}$ is the oscillatory term.
In the following lemma, we prove some lower bounds for the norm $\|\cdot\|$ which will come in handy in the following.
\begin{lemma} \label{lemma:prop_of_norm}
    Let $\eta\in\mathcal{C}^0_B(\T^2\times\T^2;\mathbb{C})$.
    Then, the following chain of inequalities hold.
    \begin{equation}
        \frac{1}{\|V\|_{L^1}}\int_{\T^2\times\T^2}|\eta|\le\|\eta\|_V\le\|\eta\|.
        \label{eq:prop_of_norm}
    \end{equation}
\end{lemma}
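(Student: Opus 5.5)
The plan is to prove the two inequalities in \eqref{eq:prop_of_norm} separately. Both follow directly from the definitions of $\|\cdot\|_V$ and $\|\cdot\|$, together with the facts that $V\ge1$ and $V\in L^1(\T^2\times\T^2)$ from Proposition \ref{prop:BluCotGva2023}. No earlier result beyond this is needed.

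For the first inequality, I will use the pointwise bound contained in the definition of the weighted norm:
\begin{equation*}
    |\eta(x,y)|\le\|\eta\|_V\,V(x,y)\quad\forall x,y\in\T^2,\ x\ne y.
\end{equation*}
The diagonal $\{(x,x)\mid x\in\T^2\}$ has zero Lebesgue measure in $\T^2\times\T^2$, so this bound holds almost everywhere. Integrating it over $\T^2\times\T^2$ gives
\begin{equation*}
    \int_{\T^2\times\T^2}|\eta|\le\|\eta\|_V\|V\|_{L^1}.
\end{equation*}
Since $V\ge1$ almost everywhere, we have $\|V\|_{L^1}\ge(2\pi)^4>0$, so we may divide by it. If $\|\eta\|_V=\infty$ the inequality is trivial.

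For the second inequality, I will split $\eta=\bar\eta+\eta_{\ne}$ and use the triangle inequality for the seminorm $\|\cdot\|_V$:
\begin{equation*}
    \|\eta\|_V\le\|\bar\eta\|_V+\|\eta_{\ne}\|_V=|\bar\eta|\sup_{x\ne y}\frac{1}{V(x,y)}+\|\eta_{\ne}\|_V.
\end{equation*}
Because $V$ takes values in $[1,\infty)$, the supremum of $1/V$ is at most $1$. This gives $\|\eta\|_V\le|\bar\eta|+\|\eta_{\ne}\|_V=\|\eta\|$.

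The only point requiring some care is the "almost everywhere" qualifier on $V\ge1$ in Proposition \ref{prop:BluCotGva2023}. In the second step the supremum is taken pointwise, so I will rely on the definition of $\|\cdot\|_V$, where $V$ is assumed to map into $[1,\infty)$ off the diagonal. If instead one only knew $V\ge1$ almost everywhere, continuity of $V$ off the diagonal upgrades this to $V\ge1$ everywhere off the diagonal: the set $\{V<1\}$ is open, and it has measure zero, so it is empty. I expect no genuine obstacle beyond this.
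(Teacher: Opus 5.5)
Your proof is correct and matches the paper's argument: the first inequality comes from integrating the pointwise bound $|\eta|\le\|\eta\|_V V$, which is the paper's H\"older step, and the second from the triangle inequality together with $\sup 1/V\le 1$. Your additional remarks, that the diagonal is Lebesgue-null and that $V\ge1$ almost everywhere upgrades to $V\ge1$ everywhere off the diagonal by continuity, are valid refinements of points the paper leaves implicit.
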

\begin{proof}
    The first of the two inequalities in \eqref{eq:prop_of_norm} follows by an application of the H\"older inequality:
    \begin{equation*}
        \int_{\T^2\times\T^2}|\eta|=\int_{\T^2\times\T^2}\frac{|\eta|}{V}V\le\|V\|_{L^1}\sup_{\T^2\times\T^2\setminus\{(x,x)\mid x\in\T^2\}}\frac{|\eta|}{V}=\|V\|_{L^1}\|\eta\|_V.
    \end{equation*}
    Regarding the second inequality in \eqref{eq:prop_of_norm}, it is enough to apply the triangle inequality and argue as follows:
    \begin{equation*}
        \|\eta\|_V\le\|\bar{\eta}\|_V+\|\eta_{\ne}\|_V=|\bar{\eta}|\sup_{\T^2\times\T^2\setminus\{(x,x)\mid x\in\T^2\}}\frac{1}{V}+\|\eta_{\ne}\|_V\le|\bar{\eta}|+\|\eta_{\ne}\|_V=\|\eta\|,
    \end{equation*}
    where in the last inequality we have used the fact that $V\ge1$ almost everywhere.
    This shows the second inequality in \eqref{eq:prop_of_norm} and hence the proof is complete.
\end{proof}

\subsection{The contraction map} \label{subsec:contraction_map}
Let us employ the following notation.
Given the $\T^d$-valued stochastic process $W_t$, let $W^2_t=((W_t)_1,(W_t)_2)$ be the $\T^2$-valued stochastic process obtained as a projection onto the first two coordinates.
% \color{red}
% Denote by $W^{d-2}_t$ the remaining $d-2$ components.
% \color{black}
Fix $k\in\Z^{d-2}$ and define the map
\begin{equation*}
    \mathcal{T}^{\kappa,n}_k:\mathcal{C}^0_B(\T^2\times\T^2;\mathbb{C})\to\mathcal{C}^0_B(\T^2\times\T^2;\mathbb{C})
\end{equation*}
via
\begin{equation*}
    \mathcal{T}^{\kappa,n}_k[\eta]\begin{pmatrix}
        x\\
        y
    \end{pmatrix}=\mathbb{E}_W\mathrm{e}^{\mathrm{i}k\cdot\left(G_n^\kappa(x)-G_n^\kappa(y)\right)}(P_{\kappa,n}^{(2)})^N[\eta]\begin{pmatrix}
        x+\sqrt{2\kappa}\left(W^2_{n(N+1)+1}-W^2_{n(N+1)}\right)\\
        y+\sqrt{2\kappa}\left(W^2_{n(N+1)+1}-W^2_{n(N+1)}\right)
    \end{pmatrix},
\end{equation*}
where
\begin{equation*}
    G^n_\kappa(x):=\int_0^1g\left(x+\sqrt{2\kappa}\left(W^2_{n(N+1)+t}-W^2_{n(N+1)}\right)\right)\;\mathrm{d}t,\quad x\in\T^2.
\end{equation*}
Observe that when $\kappa=0$ we have $G^n_0\equiv g$ for any $n$.
Finally, let us denote
\begin{equation*}
    (\mathcal{T}^\kappa_k)^{n+1}=\mathcal{T}^{\kappa,n}_k\circ \mathcal{T}^{\kappa,n-1}_k\circ\dots\circ \mathcal{T}^\kappa_k.
\end{equation*}
In the following lemma, we show that the map $\mathcal{T}^\kappa_k$ is a contraction with rate independent of $\kappa$.
\begin{lemma} \label{lemma:contraction}
    Let $g$ fulfil \eqref{eq:assumption_on_g}, and let $V$ be given in Proposition \ref{prop:BluCotGva2023}.
    Then, there exist $C,\gamma>0$ such that for any $n\in\N$ and any $\kappa\ge0$ sufficiently small we have
    \begin{align*}
        \|(\mathcal{T}^\kappa_k)^n\eta\|\le C&\mathrm{e}^{-\gamma n}\|\eta\|\quad\forall k\in\Z^{d-2}_0,\,\forall\eta\in\mathcal{C}^0_B(\T^2\times\T^2;\mathbb{C}),\\
        \|(\mathcal{T}^\kappa_k)^n\eta\|\le C&\mathrm{e}^{-\gamma n}\|\eta\|\quad\forall k\in\Z^{d-2},\,\forall\eta\in\mathcal{C}^0_B(\T^2\times\T^2;\mathbb{C}),\,\fint_{\T^2\times\T^2}\eta=0.
    \end{align*}
\end{lemma}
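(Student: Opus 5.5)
The idea is to prove a one-step contraction and then iterate it. I aim to show that there is a constant $\theta<1$, independent of $n$, $\kappa$ and $k$, such that
\begin{equation*}
\|\mathcal{T}^{\kappa,n}_k\eta\|\le\theta\|\eta\|
\end{equation*}
for every $n$. This should hold for all $\eta$ when $k\ne0$, and for mean-free $\eta$ when $k=0$. Composing the maps then gives the claim with $\gamma=-\log\theta$ and $C=1$. For $k=0$, note two things. First, $\mathcal{T}^{\kappa,n}_0$ maps mean-free functions to mean-free functions, because both the translation and $P^{(2)}$ preserve Lebesgue measure on $\T^2\times\T^2$. Second, $\|\cdot\|$ coincides with $\|\cdot\|_V$ on mean-free functions. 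The bound then follows from Proposition \ref{prop:BluCotGva2023}: translating both arguments by the same random vector preserves $V$, so we get $\le Ce^{-\gamma N}\|\eta\|$. This step needs $V$ to depend only on $x-y$, or at least to be invariant under diagonal translation. If $V$ is not of that form, I would instead absorb the Brownian shift into the two-point kernel, viewing it as part of a Pierrehumbert step with diffusion, which is what \cite{CooIyeSon2025} allows. Choosing $N$ large makes $Ce^{-\gamma N}<1$.

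For $k\neq0$, split $\eta=\bar\eta+\eta_{\ne}$.

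\emph{Oscillatory part.} Since $|e^{\mathrm{i}k\cdot(\cdots)}|=1$, the same argument gives
\begin{equation*}
\|\mathcal{T}^{\kappa,n}_k\eta_{\ne}\|_V\le Ce^{-\gamma N}\|\eta_{\ne}\|_V .
\end{equation*}
Passing from $\|\cdot\|_V$ to $\|\cdot\|$, the output's average is controlled by Lemma \ref{lemma:prop_of_norm}, namely $|\overline{h}|\le (2\pi)^{-4}\|V\|_{L^1}\|h\|_V$. Its oscillatory part is bounded by $\|h\|_V+|\bar h|$. Altogether this contribution is $\le C'e^{-\gamma N}\|\eta_{\ne}\|_V$.

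\emph{Constant part.} Here $(P^{(2)})^N\bar\eta=\bar\eta$, so
\begin{equation*}
\mathcal{T}^{\kappa,n}_k\bar\eta=\bar\eta\,\mathbb{E}_W\bigl[F(x)\overline{F(y)}\bigr],\qquad F(x)=e^{\mathrm{i}k\cdot G^\kappa_n(x)} .
\end{equation*}
Two bounds are needed.

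\begin{itemize}
\item Its $V$-norm is at most $|\bar\eta|$, since the function is bounded by $1$ and $V\ge1$.
\item Its average equals $\bar\eta\,(2\pi)^{-4}\mathbb{E}_W\bigl|\int_{\T^2}F\bigr|^2$. Lemma \ref{lemma:unif_bdd_from_1} applies for $\kappa$ small: $\|G^\kappa_n-g\|_{\mathcal C^r}\le\|g\|_{\mathcal C^{r+1}}\sqrt{2\kappa}\sup_{t\le1}|W_t|$, which is $\le\delta$ on an event of $\mathbb{P}_W$-probability $\ge1-p(\kappa)$ with $p(\kappa)\to0$. Hence the average is $\le|\bar\eta|\bigl((1-\varepsilon)^2+p(\kappa)\bigr)\le(1-\varepsilon')|\bar\eta|$.
\end{itemize}

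The difficulty is that the oscillatory part of $\mathcal{T}_k\bar\eta$ is only bounded by about $2|\bar\eta|$ in $V$-norm, which does not give a contraction in one step. To fix this, I would use a weighted norm $|\bar\eta|+A\|\eta_{\ne}\|_V$, which is equivalent to $\|\cdot\|$, or else iterate two steps. Over two steps, the oscillatory mass generated from $\bar\eta$ is damped by $Ce^{-\gamma N}$ at the next step, while averages contract by $1-\varepsilon'$. Tracking the pair $(a_n,b_n)=(|\bar\eta_n|,\|(\eta_n)_{\ne}\|_V)$ gives the linear recursion
\begin{equation*}
a_{n+1}\le(1-\varepsilon')a_n+c_1e^{-\gamma N}b_n,\qquad b_{n+1}\le2a_n+c_1e^{-\gamma N}b_n .
\end{equation*}
Its matrix has spectral radius $<1$ once $N$ is large, because the off-diagonal product $2c_1e^{-\gamma N}$ is small compared to $\varepsilon'(1-c_1e^{-\gamma N})$. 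This yields the exponential bound with constants uniform in $k$, $n$ and $\kappa$.

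The main obstacle is twofold: getting the uniform-in-$k$ averaging gain, which is exactly what Lemma \ref{lemma:unif_bdd_from_1} provides, together with its robustness under the Brownian perturbation; and ensuring that the diagonal Brownian shift commutes with, or is absorbed into, the $V$-norm estimate.
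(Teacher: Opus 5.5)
Your proposal is correct and matches the paper's proof essentially step by step: the same splitting $\eta=\bar\eta+\eta_{\ne}$, Lemma \ref{lemma:unif_bdd_from_1} (stable under the Brownian perturbation of $g$) for the average of the constant part, Proposition \ref{prop:BluCotGva2023} with Lemma \ref{lemma:prop_of_norm} for the oscillatory part, the same recursion with matrix $\begin{bmatrix}1-\varepsilon&\delta\\2&\delta\end{bmatrix}$ whose spectral radius is pushed below $1$ by taking $N$ large, and a one-step bound $\delta\|\eta\|$ when $k=0$ (your remark that $\mathcal{T}^{\kappa,n}_0$ preserves the mean, which is needed to iterate this, is left implicit in the paper). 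For the diagonal Brownian shift, the paper only uses $\sup_{x\ne y,\,w}V(x+w,y+w)/V(x,y)<\infty$, which holds because $V$ is bounded above and below by functions of $|x-y|$ (\cite[Assumption 2.2]{CooIyeSon2025}); this fixed constant is absorbed by enlarging $N$, so neither exact translation invariance of $V$ nor absorbing the shift into the kernel is required.
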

\begin{proof}
    Let us begin with the case $k\ne0$.
    Fixing any $n\in\N$, due to linearity one has
    \begin{align}
    \begin{split}
        \mathcal{T}^{\kappa,n}_k[\eta](x,y)=&\mathbb{E}_W\mathrm{e}^{\mathrm{i}k\cdot\left(G^n_\kappa(x)-G^n_\kappa(y)\right)}\bar{\eta}\\
        &+\mathbb{E}_W\mathrm{e}^{\mathrm{i}k\cdot\left(G^n_\kappa(x)-G^n_\kappa(y)\right)}(P_{\kappa,n}^{(2)})^N[\eta_{\ne}]\begin{pmatrix}
        x+\sqrt{2\kappa}\left(W^2_{n(N+1)+1}-W^2_{n(N+1)}\right)\\
        y+\sqrt{2\kappa}\left(W^2_{n(N+1)+1}-W^2_{n(N+1)}\right)
    \end{pmatrix},
        \label{eq:contraction_step_m1}
    \end{split}
    \end{align}
    which in turn gives
    \begin{align}
    \begin{split}
        |\overline{\mathcal{T}^{\kappa,n}_k[\eta]}|\le&|\bar{\eta}|\mathbb{E}_W\left|\fint_{\mathbb{T}^2\times\mathbb{T}^2}\mathrm{e}^{\mathrm{i}k\cdot\left(G^n_\kappa(x)-G^n_\kappa(y)\right)}\;\mathrm{d}x\,\mathrm{d}y\right|\\
        &+\mathbb{E}_W\left|\fint_{\mathbb{T}^2\times\mathbb{T}^2}\mathrm{e}^{\mathrm{i}k\cdot\left(G^n_\kappa(x)-G^n_\kappa(y)\right)}(P_{\kappa,n}^{(2)})^N[\eta_{\ne}]\begin{pmatrix}
        x+\sqrt{2\kappa}\left(W^2_{n(N+1)+1}-W^2_{n(N+1)}\right)\\
        y+\sqrt{2\kappa}\left(W^2_{n(N+1)+1}-W^2_{n(N+1)}\right)
    \end{pmatrix}\;\mathrm{d}x\,\mathrm{d}y\right|.
        \label{eq:contraction_step_0}
    \end{split}
    \end{align}
    The first term on the right-hand side of \eqref{eq:contraction_step_0} can be handled by observing that
    \begin{equation}
        \left|\fint_{\mathbb{T}^2\times\mathbb{T}^2}\mathrm{e}^{\mathrm{i}k\cdot\left(G^n_\kappa(x)-G^n_\kappa(y)\right)}\;\mathrm{d}x\,\mathrm{d}y\right|=\left|\fint_{\mathbb{T}^2}\mathrm{e}^{\mathrm{i}k\cdot G^n_\kappa(x)}\;\mathrm{d}x\right|^2,
        \label{eq:contraction_step_01}
    \end{equation}
    where
    \begin{align}
    \begin{split}
        \mathbb{E}_W\left|\fint_{\mathbb{T}^2}\mathrm{e}^{\mathrm{i}k\cdot G^n_\kappa(x)}\;\mathrm{d}x\right|^2\le&\mathbb{E}_W\left[\left|\fint_{\mathbb{T}^2}\mathrm{e}^{\mathrm{i}k\cdot G^n_\kappa(x)}\;\mathrm{d}x\right|^2\,\bigg|\,|W^2_{n(N+1)+t}-W^2_{n(N+1)}|\le\kappa^{-1/4}\;\forall t\in[0,1]\right]\\
        &+\mathbb{P}_W\left(\max_{t\in[0,1]}|W^2_{n(N+1)+t}-W^2_{n(N+1)}|>\kappa^{-1/4}\right).
        \label{eq:contraction_step_02}
    \end{split}
    \end{align}
    In view of assumption \eqref{eq:assumption_on_g} and the fact that
    \begin{equation*}
        \left|\partial^\alpha G^n_\kappa(x)-\partial^\alpha g(x)\right|=\left|\int_0^1\left[\partial^\alpha g\left(x+\sqrt{2\kappa}\left(W^2_{n(N+1)+t}-W^2_{n(N+1)}\right)\right)-\partial^\alpha g(x)\right]\;\mathrm{d}t\right|\le\sqrt{2}\kappa^{1/4}\|g\|_{\mathcal{C}^{|\alpha|+1}}
    \end{equation*}
    for all $|W^2_{n(N+1)+t}-W^2_{n(N+1)}|\le\kappa^{-1/4}$ and any $\alpha\in\N^2$, we can apply Lemma \ref{lemma:unif_bdd_from_1} with $m=2$, $p=d-2$, and $f\equiv G^n_\kappa$.
    Therefore, there exists a constant $\varepsilon>0$ depending on $g$ only such that
    \begin{equation*}
        \left|\fint_{\mathbb{T}^2}\mathrm{e}^{\mathrm{i}k\cdot G^n_\kappa(x)}\;\mathrm{d}x\right|^2\le1-2\varepsilon
    \end{equation*}
    for all $|W^2_{n(N+1)+t}-W^2_{n(N+1)}|\le\kappa^{-1/4}$.
    Together with \eqref{eq:contraction_step_01} and \eqref{eq:contraction_step_02}, this implies
    \begin{equation}
        \mathbb{E}_W\left|\fint_{\mathbb{T}^2\times\mathbb{T}^2}\mathrm{e}^{\mathrm{i}k\cdot\left(G^n_\kappa(x)-G^n_\kappa(y)\right)}\;\mathrm{d}x\,\mathrm{d}y\right|\le1-2\varepsilon+\mathbb{P}_W\left(\max_{t\in[0,1]}|W^2_{n(N+1)+t}-W^2_{n(N+1)}|>\kappa^{-1/4}\right)\le1-\varepsilon,
        \label{eq:contraction_step_1}
    \end{equation}
    where it is always possible to choose $\kappa$ small enough so that the last inequality holds, as $\varepsilon$ is independent of $\kappa$.
    Regarding the second term on the right-hand side of \eqref{eq:contraction_step_0}, we appeal to Proposition \ref{prop:BluCotGva2023}, as $\eta_{\neq}$ is mean-free.
    More precisely, upon fixing $N$ large enough, there exists an arbitrarily small $\delta>0$ independent of $\kappa$ such that
    \begin{align}
    \begin{split}
        \mathbb{E}_W\biggl|\fint_{\mathbb{T}^2\times\mathbb{T}^2}\mathrm{e}^{\mathrm{i}k\cdot\left(G^n_\kappa(x)-G^n_\kappa(y)\right)}&(P_{\kappa,n}^{(2)})^N[\eta_{\ne}]\begin{pmatrix}
        x+\sqrt{2\kappa}\left(W^2_{n(N+1)+1}-W^2_{n(N+1)}\right)\\
        y+\sqrt{2\kappa}\left(W^2_{n(N+1)+1}-W^2_{n(N+1)}\right)
    \end{pmatrix}\;\mathrm{d}x\,\mathrm{d}y\biggr|\\
        \le&\fint_{\mathbb{T}^2\times\mathbb{T}^2}\left|(P_{\kappa,n}^{(2)})^N[\eta_{\ne}](x,y)\right|\;\mathrm{d}x\,\mathrm{d}y\\
        \le&(2\pi)^{-4}\|V\|_{L^1}\left\|(P_{\kappa,n}^{(2)})^N[\eta_{\ne}]\right\|_V\le\delta\|\eta_{\ne}\|_V,
        \label{eq:contraction_step_2} 
    \end{split}
    \end{align}
    where in the second inequality we have employed Lemma \ref{lemma:prop_of_norm}.
    %Let us remark that in what follows we may re-define $N$ to be even larger, without explicitly mentioning it.
    Putting together the estimates \eqref{eq:contraction_step_0}, \eqref{eq:contraction_step_1} and \eqref{eq:contraction_step_2}, we have reached
    \begin{equation}
        |\overline{\mathcal{T}^{\kappa,n}_k[\eta]}|\le(1-\varepsilon)|\bar{\eta}|+\delta\|\eta_{\ne}\|_V.
        \label{eq:contraction_step_3}
    \end{equation}
    Next, the triangle inequality and the fact that $V\ge1$, together with \eqref{eq:contraction_step_m1}, give
    \begin{align*}
        \|(\mathcal{T}^{\kappa,n}_k[\eta])_{\ne}\|_V=&\sup_{\substack{x,y\in\mathbb{T}^2\\x\ne y}}\frac{1}{V(x,y)}\biggl|\mathbb{E}_W\mathrm{e}^{\mathrm{i}k\cdot\left(G^n_\kappa(x)-G^n_\kappa(y)\right)}\bar{\eta}\\
        &+\mathbb{E}_W\mathrm{e}^{\mathrm{i}k\cdot\left(G^n_\kappa(x)-G^n_\kappa(y)\right)}(P_{\kappa,n}^{(2)})^N[\eta_{\ne}]\begin{pmatrix}
        x+\sqrt{2\kappa}\left(W^2_{n(N+1)+1}-W^2_{n(N+1)}\right)\\
        y+\sqrt{2\kappa}\left(W^2_{n(N+1)+1}-W^2_{n(N+1)}\right)
    \end{pmatrix}\\
        &-\bar{\eta}\fint_{\mathbb{T}^2\times\mathbb{T}^2}\mathbb{E}_W\mathrm{e}^{\mathrm{i}k\cdot\left(G^n_\kappa(x)-G^n_\kappa(y)\right)}\;\mathrm{d}x\,\mathrm{d}y-\fint_{\mathbb{T}^2\times\mathbb{T}^2}\mathbb{E}_W\mathrm{e}^{\mathrm{i}k\cdot\left(G^n_\kappa(x)-G^n_\kappa(y)\right)}(P_{\kappa,n}^{(2)})^N[\eta_{\ne}](x,y)\;\mathrm{d}x\,\mathrm{d}y\biggr|\\
        \le&|\bar{\eta}|+\|(P_{\kappa,n}^{(2)})^N[\eta_{\ne}]\|_V\sup_{\substack{x,y,w\in\T^2\\x\ne y}}\frac{V(x+w,y+w)}{V(x,y)}\\
        &+|\bar{\eta}|+\fint_{\mathbb{T}^2\times\mathbb{T}^2}\left|(P_{\kappa,n}^{(2)})^N[\eta_{\ne}]\begin{pmatrix}
        x+\sqrt{2\kappa}\left(W^2_{n(N+1)+1}-W^2_{n(N+1)}\right)\\
        y+\sqrt{2\kappa}\left(W^2_{n(N+1)+1}-W^2_{n(N+1)}\right)
    \end{pmatrix}\right|\;\mathrm{d}x\,\mathrm{d}y.
    \end{align*}
    The second and the fourth term on the right-hand side can be bounded using Proposition \ref{prop:BluCotGva2023}.
    More precisely, in view of Lemma \ref{lemma:prop_of_norm}, upon fixing a larger $N\in\N$, we have
    \begin{align*}
        \|(P_{\kappa,n}^{(2)})^N[\eta_{\ne}]\|_V\sup_{\substack{x,y,w\in\T^2\\x\ne y}}\frac{V(x+w,y+w)}{V(x,y)}+\fint_{\mathbb{T}^2\times\mathbb{T}^2}|(P_{\kappa,n}^{(2)})^N[\eta_{\ne}](x,y)|\;\mathrm{d}x\,\mathrm{d}y\\
        \le\left(\sup_{\substack{x,y,w\in\T^2\\x\ne y}}\frac{V(x+w,y+w)}{V(x,y)}+(2\pi)^{-4}\|V\|_{L^1}\right)\|(P_{\kappa,n}^{(2)})^N[\eta_{\ne}]\|_V\le\delta\|\eta_{\ne}\|_V,
    \end{align*}
    in light of the fact that $V(x,y)$ has lower and upper bounds that depend on the distance $|x-y|$ only, see \cite[Assumption 2.2]{CooIyeSon2025}.
    Therefore, we have reached the following estimate
    \begin{equation}
        \|(\mathcal{T}^{\kappa,n}_k[\eta])_{\ne}\|_V\le2|\bar{\eta}|+\delta\|\eta_{\ne}\|_V.
        \label{eq:contraction_step_4}
    \end{equation}
    In view of \eqref{eq:contraction_step_3} and \eqref{eq:contraction_step_4}, defining
    \begin{equation*}
        v^n:=\biggl(\underbrace{\begin{bmatrix}
1-\varepsilon & \delta \\
2 & \delta
\end{bmatrix}}_{=:A}\biggr)^n\begin{bmatrix}
|\bar{\eta}|\\
\|\eta_{\ne}\|_V
\end{bmatrix},
    \end{equation*}
     we have that
     \begin{equation*}
         |\overline{(\mathcal{T}^\kappa_k)^{n}[\eta]}|\le v^n_1,\quad\|((\mathcal{T}^\kappa_k)^n[\eta])_{\ne}\|_V\le v_2^n.
     \end{equation*}
    The eigenvalues of the matrix $A$ are
    \begin{equation*}
        \lambda_\pm=\frac{1}{2}\left[1-\varepsilon+\delta\pm\sqrt{(1-\varepsilon)^2+\delta(\delta+2\varepsilon+6)}\right].
    \end{equation*}
    Once $\varepsilon>0$ is given by fixing $g$, one can consequently fix $N$ large enough so that $0<\delta\ll\varepsilon$.
    This choice gives $|\lambda_\pm|<1$, thus $v^n\to\boldsymbol{0}_2$ exponentially fast and the thesis follows for the case $k\ne\boldsymbol{0}_{d-2}$.
    
    Let us now address the case $k=\boldsymbol{0}_{d-2}$, for which we have the additional assumption of $\eta$ being mean-free.
    This leaves us with fewer terms to estimate.
    Upon inspecting the argument above, one can see that in this simpler case it is possible to prove the bound
    \begin{equation*}
        \|\mathcal{T}^{\kappa,n}_k[\eta]\|\le\delta\|\eta\|,
    \end{equation*}
    which concludes the proof.
\end{proof}

\section{Proofs of the main results} \label{sec:proofs}

This section is devoted to the proof of our main result Theorem \ref{thm:main}, whence Corollary \ref{cor:exp_mixer} immediately follows by duality, and to the proof of Corollary \ref{cor:enhanced_dissipation}.

\subsection{Proof of Theorem \ref{thm:main}} \label{subsec:mixing}
Let us fix $q,s,\zeta>0$, with $\zeta$ sufficiently small to be specified later.
% In the following, we will denote by $M>0$ a deterministic constant that may depend on $q,s,\zeta,d$ only, whose value may change from line to line.
In the following, whenever we write $A\lesssim B$ we mean that there exists a deterministic constant $M>0$, possibly depending on $q,s,\zeta,d,N,g$ and the constants $C,\gamma$ in Lemma \ref{lemma:contraction} only, such that $A\le MB$.
Consider the following notation for the Fourier modes
\begin{equation*}
    e_k(x):=\frac{1}{(2\pi)^\frac{d}{2}}\mathrm{e}^{\mathrm{i}k\cdot x},\qquad k\in\mathbb{Z}^d,\quad x\in\T^d.
\end{equation*}
Now, fix $k,k'\in\Z_0^d$.
First, we claim that there exists $\zeta>0$ such that
\begin{equation*}
    N^\kappa_{k,k'}:=\max\left\{n\ge0\,\mid\,\operatorname{Cor}_\kappa^n(e_k,e_{k'})>\mathrm{e}^{-\zeta n}\right\}
\end{equation*}
is almost surely finite.
Note that, if no $n\ge0$ fulfils the condition $\operatorname{Cor}_\kappa^n(e_k,e_{k'})>\mathrm{e}^{-\zeta n}$, then we may simply take $N^\kappa_{k,k'}=0$ by convention.

Indeed, let us compute $\mathbb{P}(N^\kappa_{k,k'}>\ell)$ for any fixed $\ell\in\N$.
By the Chebyshev inequality,
\begin{equation}
    \mathbb{P}\left(N^\kappa_{k,k'}>\ell\right)\le\sum_{j=\ell+1}^\infty\mathbb{P}\left(\operatorname{Cor}_\kappa^j(e_k,e_{k'})>\mathrm{e}^{-\zeta j}\right)\le\sum_{j=\ell+1}^\infty \mathrm{e}^{2\zeta j}\mathbb{E}\left|\int_{\T^d}e_k(x)e_{k'}(T_\kappa^j(x))\;\mathrm{d}x\right|^2.
    \label{eq:correlation_step_0}
\end{equation}
The expected value on the right-hand side of \eqref{eq:correlation_step_0} can be estimated as follows:
\begin{align}
\begin{split}
    \mathbb{E}\left|\int_{\T^d}e_k(x)e_{k'}(T_\kappa^j(x))\;\mathrm{d}x\right|^2=&\int_{\T^d\times\T^d}e_k(x)\overline{e_k(y)}\mathbb{E}\left[e_{k'}(T_\kappa^j(x))\overline{e_{k'}(T_\kappa^j(y))}\right]\;\mathrm{d}x\,\mathrm{d}y\\
    \le&(2\pi)^{-d}\int_{\T^d\times\T^d}\left|\mathbb{E}\left[e_{k'}(T_\kappa^j(x))\overline{e_{k'}(T_\kappa^j(y))}\right]\right|\;\mathrm{d}x\,\mathrm{d}y\\
    =&(2\pi)^{-2}\int_{\T^2\times\T^2}\left|(\mathcal{T}^{\kappa}_{(k'_3,\dots,k'_d)})^j[e_{(k'_1,k'_2)}^{(2)}](x,y)\right|\;\mathrm{d}x\,\mathrm{d}y,
    \label{eq:correlation_step_1}
\end{split}
\end{align}
Now, taking into account \eqref{eq:correlation_step_1}, appealing to Lemmas \ref{lemma:prop_of_norm} and \ref{lemma:contraction}, we get the existence of constants $\gamma,c>0$ independent of $\kappa$ such that
\begin{equation}
    \mathbb{E}\left|\int_{\T^d}e_k(x)e_{k'}(T_\kappa^j(x))\;\mathrm{d}x\right|^2\le(2\pi)^{-2}\|V\|_{L^1}\left\|(\mathcal{T}^{\kappa}_{(k'_3,\dots,k'_d)})^j[e_{(k'_1,k'_2)}^{(2)}]\right\|\le c\mathrm{e}^{-\gamma j}\|e_{(k'_1,k'_2)}^{(2)}\|.
    \label{eq:correlation_step_2}
\end{equation}
Let us be more precise about the last inequality.
If $(k'_1,k'_2)\ne\boldsymbol{0}_2$, the application of Lemma \ref{lemma:contraction} is straightforward, as $e_{(k'_1,k'_2)}^{(2)}$ is mean-free.
Now, let us consider the case of $k'_1=k'_2=0$.
Since $k'\in\Z^d_0$, there exists $q\in\{3,\dots,d\}$ such that $k'_q\ne0$.
This implies that $(k'_3,\dots,k'_d)\ne\boldsymbol{0}_{d-2}$, thus Lemma \ref{lemma:contraction} applies also in this case.

Putting \eqref{eq:correlation_step_0} and \eqref{eq:correlation_step_2} together, and further observing that $\|e_{(k'_1,k'_2)}^{(2)}\|\le3(2\pi)^{-2}$, we infer
\begin{equation}
    \mathbb{P}\left(N^\kappa_{k,k'}>\ell\right)\le3(2\pi)^{-2}c\sum_{j=\ell+1}^\infty\mathrm{e}^{(2\zeta-\gamma)j}\lesssim\mathrm{e}^{(2\zeta-\gamma)\ell}.
    \label{eq:P_N_ge_ell}
\end{equation}
Picking any $\zeta<\gamma/2$ ensures that $N^\kappa_{k,k'}$ is almost surely finite.
In view of \eqref{eq:P_N_ge_ell}, we may employ a standard argument to conclude the proof of Theorem \ref{thm:main}, see for instance the recent works \cite[Proposition 4.1]{NAVARROFERNANDEZ2026111227} or \cite[Lemma 3.3]{CooIyeSon2025}.
For the sake of completeness, we report below the key points of such argument.

Let
\begin{equation*}
    K_\kappa:=\left\lceil\max\left\{\max\{|k|,|k'|\}\,\mid\,\mathrm{e}^{\zeta N^\kappa_{k,k'}}>|k||k'|\right\}\right\rceil,
\end{equation*}
with the convention that $K_\kappa=0$ if no $k,k'\in\Z^d_0$ fulfil the condition $\mathrm{e}^{\zeta N^\kappa_{k,k'}}>|k||k'|$.
Then, use \eqref{eq:P_N_ge_ell} to compute the following:
\begin{align}
\begin{split}
    \mathbb{P}(K_{\kappa}>\ell)\le&\sum_{|k|\ge1,|k'|>\ell}\mathbb{P}(\mathrm{e}^{\zeta N^\kappa_{k,k'}}>|k||k'|)+\sum_{|k|>\ell,|k'|\ge1}\mathbb{P}(\mathrm{e}^{\zeta N^\kappa_{k,k'}}>|k||k'|)\\
    \lesssim&\sum_{|k|\ge1}|k|^{2-\frac\gamma\zeta}\sum_{|k'|>\ell}|k'|^{2-\frac\gamma\zeta}\lesssim\ell^{2+d-\frac\gamma\zeta},
    \label{eq:P_K_ge_ell}
\end{split}
\end{align}
where the last inequality holds true provided that we pick
\begin{equation*}
    \zeta<\frac{\gamma}{d+2}<\frac{\gamma}{2}.
\end{equation*}
This condition also implies that $\lim_{\ell\to\infty}\mathbb{P}(K_{\kappa}>\ell)=0$, therefore $K_\kappa$ is almost surely finite.
%\textcolor{red}{[is this enough or we have to further restrict $\zeta$ in order to apply again Borel--Cantelli?]}
% Consider the events
% \begin{equation*}
%     E^\kappa_{k,k'}=\left\{\mathrm{e}^{\zeta N^\kappa_{k,k'}}>|k||k'|\right\}.
% \end{equation*}
% We have
% \begin{equation*}
%     \mathbb{P}(E^\kappa_{k,k'})=\mathbb{P}\left(N^\kappa_{k,k'}>\frac1\zeta\left(\log|k|+\log|k'|\right)\right)\le3c(|k||k'|)^{2-\frac\gamma\zeta},
% \end{equation*}
% whence
% \begin{equation*}
%     \sum_{k,k'}\mathbb{P}(E^\kappa_{k,k'})\le3c\left(\sum_k|k|^{2-\frac\gamma\zeta}\right)^2<\infty,
% \end{equation*}
As a consequence, defining
\begin{equation*}
    C_\kappa:=\max_{|k|,|k'|\le K_\kappa}\mathrm{e}^{\zeta N_{k,k'}^\kappa},
\end{equation*}
we have the almost-sure estimate
\begin{equation*}
    \mathrm{e}^{\zeta N^\kappa_{k,k'}}\le C_\kappa|k||k'|.
\end{equation*}
Hence, denoting by $\varphi_k,\psi_{k'}$ the Fourier coefficients of $\varphi,\psi$ respectively, we get
\begin{align*}
    \left|\int_{\T^d}\varphi(x)\psi(T_\kappa^n(x))\;\mathrm{d}x\right|\le&\sum_{k,k'}|\varphi_k||\psi_{k'}|\operatorname{Cor}^n_\kappa(e_k,e_{k'})\le\sum_{k,k'}|\varphi_k||\psi_{k'}|\mathrm{e}^{\zeta(N^\kappa_{k,k'}-n)}\\
    \le&C_\kappa\mathrm{e}^{-\zeta n}\sum_{k}|k||\varphi_k|\sum_{k'}|k'||\psi_{k'}|\lesssim C_{\kappa}\mathrm{e}^{-\zeta n}\|\varphi\|_{H^s}\|\psi\|_{H^s},
\end{align*}
where the last inequality holds for all $s>1+d/2$, so in particular
\begin{equation*}
    \left|\int_{\T^d}\varphi(x)\psi(T_\kappa^n(x))\;\mathrm{d}x\right|\lesssim C_{\kappa}\mathrm{e}^{-\zeta n}\|\varphi\|_{H^{2+\frac d2}}\|\psi\|_{H^{2+\frac d2}}.
\end{equation*}
Therefore, in view of \cite[Lemma 7.1]{BedBluPun2022} and the fact that $C_\kappa\ge1$ almost surely, for all $\varepsilon>0$ and any $s\in(0,1+d/2)$ we have
\begin{equation}
    \left|\int_{\T^d}\varphi(x)\psi(T_\kappa^n(x))\;\mathrm{d}x\right|\lesssim C_{\kappa}\left(\mathrm{e}^{-\zeta n}\varepsilon^{-2\frac{2-s+d/2}{s}}+2\varepsilon\right)\|\varphi\|_{H^s}\|\psi\|_{H^s}.
    \label{eq:hidden_constant_M}
\end{equation}
%\textcolor{red}{[This is maybe good to write since there's probably a typo in \cite{BedBluPun2022}] 2 typos\\}
Now, choosing $\varepsilon=\mathrm{e}^{-\bar{\gamma}n}$ with $\bar{\gamma}:=s\zeta/(d+4)$, we obtain the almost-sure estimate
\begin{equation*}
    \left|\int_{\T^d}\varphi(x)\psi(T_\kappa^n(x))\;\mathrm{d}x\right|\le D_\kappa\mathrm{e}^{-\bar{\gamma} n}\|\varphi\|_{H^s}\|\psi\|_{H^s},
\end{equation*}
where $D_\kappa:=3MC_\kappa$, with $M$ being the hidden constant in \eqref{eq:hidden_constant_M}.
There is only left to prove that $C_\kappa$ has finite moments, independently of $\kappa$.
To this regard, we use the Cauchy--Schwarz inequality, \eqref{eq:P_N_ge_ell}, \eqref{eq:P_K_ge_ell}, and the fact that the elements $k\in\Z^d$ with $|k|\le\ell$ are $\mathcal{O}(\ell^d)$ to infer
\begin{align*}
    \mathbb{E}|C_\kappa|^q&=\sum_{\ell=1}^\infty\mathbb{E}\left[\mathbbm{1}_{K_\kappa=\ell}\max_{|k|,|k'|\le\ell}\mathrm{e}^{q\zeta N_{k,k'}^\kappa}\right]\\
    &\le\sum_{\ell=1}^\infty\sqrt{\mathbb{P}(K_\kappa>\ell-1)}\sqrt{\sum_{|k|,|k'|\le\ell}\left(1+\sum_{j=1}^\infty\mathbb{P}\left(N_{k,k'}^\kappa>\frac{\log{j}}{2q\zeta}\right)\right)}\\
    &\lesssim1+\sum_{\ell=1}^\infty\ell^{1+\frac32d-\frac{\gamma}{2\zeta}}<\infty,
\end{align*}
% where the last inequality holds true provided that
% \begin{equation*}
%     \zeta<\min\left\{\frac{\gamma}{2(1+qd)},\frac{\gamma}{d+2}\right\}.
% \end{equation*}
% Finally, further restricting to
provided that
\begin{equation*}
    \zeta<\min\left\{\frac{\gamma}{2(1+qd)},\frac{\gamma}{2+5d}\right\}.
\end{equation*}
The proof of Theorem \ref{thm:main} is then concluded.

\subsection{Proof of Corollary \ref{cor:enhanced_dissipation}} \label{subsec:enh_diss}

The proof of Corollary \ref{cor:enhanced_dissipation} follows closely the proof of \cite[Theorem 1.1]{CooIyeSon2025}.
Let us fix any $\varepsilon>0$.
In the following, whenever we write $A\lesssim B$ we mean that there exists a deterministic constant $M>0$, possibly depending on $\varepsilon,d,N,g$ and the constants $C,\gamma$ in Lemma \ref{lemma:contraction} only, such that $A\le MB$.
Recall that $N$ is the number of time steps appearing in the definition of $u_{\underline{\omega}}$ in \eqref{eq:def_of_u}.
First, arguing as in the proof of \cite[Lemma 5.6]{ConKisRyzZla2008} (cf. Equation $(5.8)$ therein and below), we have
\begin{equation}
    \|\rho(n(N+1)+N+1)\|_{L^\infty}\lesssim\kappa^{-\frac{d}{4}-\frac\varepsilon4}\|\rho(n(N+1)+N)\|_{L^2}.
    \label{eq:enh_diss_step_m2}
\end{equation}
We now use \cite[Lemma 4.18]{BluPun2023} to infer
\begin{equation}
    \|\rho(n(N+1)+N)\|_{L^2}\lesssim\kappa^{-\frac\varepsilon4}\|\rho(n(N+1))\|_{H^{-\frac{\varepsilon}{2}}}.
    \label{eq:enh_diss_step_m1}
\end{equation}
Putting together \eqref{eq:enh_diss_step_m2} and \eqref{eq:enh_diss_step_m1}, one gets
\begin{equation}
    \|\rho(n(N+1)+N+1)\|_{L^\infty}\lesssim\kappa^{-\frac{d+2\varepsilon}{4}}\|\rho(n(N+1))\|_{H^{-\frac\varepsilon2}}.
    \label{eq:enh_diss_step_0}
\end{equation}
By Theorem \ref{thm:main}, there exist a constant $\gamma>0$ and a positive random variable $\tilde D_\kappa$ over $(\Omega,\mathcal{F},\mathbb{P})$ such that $\mathbb{E}|\tilde D_\kappa|^q<\infty$ uniformly in $\kappa$ and
\begin{align}
\begin{split}
    \|\rho(n(N+1))\|_{H^{-\frac\varepsilon2}}\le&\mathbb{E}_W\|\rho(N+1)\circ(Z_\kappa^1)^{-1}\circ\cdots\circ(Z_\kappa^{n-1})^{-1}\|_{H^{-\frac\varepsilon2}}\\
    \le&(\underbrace{\mathbb{E}_W\tilde D_\kappa}_{=:D_\kappa})\mathrm{e}^{-\gamma(n-1)}\|\rho(N+1)\|_{H^{\frac\varepsilon2}}.
    \label{eq:enh_diss_step_1}
\end{split}
\end{align}
One can check that, for $q\ge1$, the Jensen inequality implies
\begin{equation*}
    \mathbb{E}_0|D_\kappa|^q=\mathbb{E}_0|\mathbb{E}_W\tilde D_\kappa|^q\le\mathbb{E}|\tilde D_\kappa|^q<\infty,
\end{equation*}
uniformly in $\kappa$.
On the other hand, for $q\in(0,1)$ the Jensen inequality again gives
\begin{equation*}
    \mathbb{E}_0|D_\kappa|^q=\mathbb{E}_0|\mathbb{E}_W\tilde D_\kappa|^q\le|\mathbb{E}\tilde D_\kappa|^q<\infty,
\end{equation*}
uniformly in $\kappa$.
Now, \cite[Lemma 4.17]{BluPun2023} implies
\begin{equation}
    \|\rho(N+1)\|_{H^{\frac\varepsilon2}}\lesssim\kappa^{-\frac \varepsilon4}\|\rho(1)\|_{L^2}\lesssim\kappa^{-\frac\varepsilon4}\kappa^{-\frac{d}{4}-\frac\varepsilon4}\|\rho_0\|_{L^1},
    \label{eq:enh_diss_step_2}
\end{equation}
where in the last inequality we have argued again as in the proof of \cite[Lemma 5.6]{ConKisRyzZla2008}.
Combining the estimates \eqref{eq:enh_diss_step_0}, \eqref{eq:enh_diss_step_1} and \eqref{eq:enh_diss_step_2} we obtain
\begin{equation*}
    \|\rho(n(N+1)+N+1)\|_{L^\infty}\lesssim\frac{D_\kappa}{\kappa^{\frac{d}{2}+\varepsilon}}\mathrm{e}^{-\gamma(n-1)}\|\rho_0\|_{L^1},
\end{equation*}
whence the maximum principle gives
\begin{equation*}
    \|\rho(t)\|_{L^\infty}\lesssim\frac{D_\kappa}{\kappa^{\frac{d}{2}+\varepsilon}}\mathrm{e}^{-\gamma(\frac{t}{N+1}-2)}\|\rho_0\|_{L^1},\quad t\ge N+1.
\end{equation*}
Up to rescaling $D_\kappa$, this concludes the proof of Corollary \ref{cor:enhanced_dissipation}.

\section*{Acknowledgements}
The author is deeply grateful to David Villringer for suggesting the problem and for his constant support throughout the development of this work.
The author also thanks Michele Coti Zelati for valuable comments on an earlier version of the manuscript, and Víctor Navarro-Fernández for useful discussions.
The author acknowledges the use of AI tools to refine the assumption in Lemma \ref{lemma:unif_bdd_from_1} and for proofreading.
The research of LM was supported by EPSRC under grant number EP/Y03533X/1, with additional support from the Department of Mathematics at Imperial College London.

% \section*{Declaration of AI use}

\bibliographystyle{abbrvnat}
\bibliography{ref}
\end{document}